\documentclass[amstex,12pt, amssymb]{article}

\usepackage{mathtext}
\usepackage[cp1251]{inputenc}
\usepackage[T2A]{fontenc}
\usepackage[dvips]{graphicx}
\usepackage{amsmath}
\usepackage{amssymb}
\usepackage{amsxtra}
\usepackage{latexsym}
\usepackage{ifthen}

\newcounter{lemma}[section]

\newcounter{corollary}[section]

\newcounter{remark}[section]

\newcounter{theorem}[section]

\newcounter{proposition}[section]

\newcounter{example}

\numberwithin{equation}{section}

\begin{document}

\markboth{V. DESYATKA, E.~SEVOST'YANOV}{\centerline{ON
BOUNDARY-NON-PRESERVING MAPPINGS ...}}

\def\cc{\setcounter{equation}{0}
\setcounter{figure}{0}\setcounter{table}{0}}

\overfullrule=0pt


\author{VICTORIA DESYATKA, OLEKSANDR DOVHOPIATYI, \\ EVGENY SEVOST'YANOV}

\title{
{\bf ON BOUNDARY-NON-PRESERVING MAPPINGS WITH INTEGRAL CONSTRAINTS}}

\date{\today}
\maketitle

\begin{abstract}
This manuscript is devoted to the study of mappings, satisfying the
upper weighted Poletsky inequality. We study the case where the
boundary of the domain may not be preserved under the mapping and,
besides that, the majorant from the above inequality satisfies
constraints of the integral-type. Under certain additional
conditions on the definition domain and the corresponding cluster
sets, we prove that families of above mappings are equicontinuous in
the closure of this domain.
\end{abstract}

\bigskip
{\bf 2010 Mathematics Subject Classification: Primary 30C65;
Secondary 31A15, 31B25}

\section{Introduction}

The paper is devoted to the study of mappings with bounded and
finite distortion, see, e.g., \cite{Cr$_2$}, \cite{MRV$_1$},
\cite{MRSY$_1$}--\cite{MRSY$_2$}, \cite{PSS}, \cite{RV},
\cite{SalSt}, \cite{Vu} and~\cite{Va}. We are interested in mappings
that have upper estimates for the distortion of the modulus of
families of paths as part of their definition, see, e.g.,
\cite{Cr$_1$}--\cite{Cr$_3$}, \cite{MRSY$_2$} and \cite{Sev$_3$}. As
can be seen, the study of the local behavior of such mappings
predominantly assumes one of the following properties: the mappings
are either homeomorphisms or open, discrete, and closed. The
papers~\cite{Cr$_1$}, \cite{NP}, \cite{Va},
\cite{RS$_1$}--\cite{RS$_3$} can be classified in the first group,
while publications \cite{Cr$_3$}, \cite{Sr}, \cite{Vu} belong to the
second. We should note that the openness, discreteness, and
closeness of mappings imply the preservation of the boundary under
them, and vice versa, open, discrete, and boundary-preserving
mappings are closed; see, e.g., \cite[Theorem]{Vu}. Non-closed
(boundary-non-preserving) mappings remain poorly investigated. This
is primarily due to the rather complex methodology of their study,
since the behavior of the so-called path liftings under them
correlates poorly with the behavior of the corresponding mappings.
We should also note that there are quite a lot of non-closed
mappings. For example, an analytic function $f(z)=z^4,$ on the
complex plane is a closed mapping in ${\Bbb D}=\{z\in {\Bbb C}:
|z|<1\}$ but in the disk $B(1, 1)=\{z\in {\Bbb C}: |z-1|<1\}$ it is
no longer such. Due to the well-known theorems of
Casorati-Weierstrass or Picard, even an isolated singularity of an
analytic function will be its essential singularity only in the case
when the mapping is not closed (closed mappings, in particular,
preserve the boundary of the domain, and, at the same time, the
cluster set of an analytic function with an essential singularity is
the entire extended complex plane).

\medskip
Relatively recently, we began studying non-closed mappings,
proposing the following methodology: reducing the study of
boundary-non-preserving mappings to their investigation in smaller
domains in which the mappings are already closed. This, in turn, can
be achieved by requiring certain additional restrictions on the
cluster set; see~\cite{DS$_1$}. In this manuscript, we continue
research in this direction. As in~\cite{DS$_1$}, the main object of
study is the boundary behavior of mappings, as well as the global
behavior of their families in the closure of the domain. A classical
result on this topic states the following (see~\cite{NP}).

\medskip
{\bf Theorem~{\bf A.} (N\"{a}kki--Palka).}{\it\, Let $\frak{F}$ be a
family of $K$-quasiconformal mappings of a domain $D\ne
\overline{{\Bbb R}^n}$ onto a domain $D^{\,\prime}$ and let either
$D$ or $D^{\,\prime}$ be quasiconformally collared on the boundary.
Then $\frak{F}$ is uniformly equicontinuous if and only if each
$f\in\frak{F}$ can be extended to a continuous mapping of
$\overline{D}$ onto $\overline{D^{\,\prime}}$ and
$\inf\limits_{\frak{F}} h(f(A))>0$ for some continuum $A$ in $D.$}

\medskip Here $h(f(A))$ denotes the chordal diameter of the set
$f(A)\subset\overline{{\Bbb R}^n},$ see
e.g.~\cite[Definition~12.1]{Va}.

\medskip
We have obtained some analogs of this statement, including for
homeomorphisms and open discrete and closed mappings, see, e.g.
\cite{Sev$_4$} and \cite{SevSkv$_1$}. In the most general case, when
the mappings are neither homeomorphisms nor closed mappings, we have
obtained the following statement, see below. Let us first formulate
the definitions and designations that will appear below.

\medskip A Borel function $\rho:{\Bbb R}^n\,\rightarrow [0,\infty] $
is called {\it admissible} for the family $\Gamma$ of paths $\gamma$
in ${\Bbb R}^n,$ if the relation
\begin{equation*}\label{eq1.4}
\int\limits_{\gamma}\rho (x)\, |dx|\geqslant 1
\end{equation*}
holds for all (locally rectifiable) paths $ \gamma \in \Gamma.$ In
this case, we write: $\rho \in {\rm adm} \,\Gamma .$ Let $p\geqslant
1,$ then {\it $p$-modulus} of $\Gamma $ is defined by the equality
\begin{equation*}\label{eq1.3gl0}
M_p(\Gamma)=\inf\limits_{\rho \in \,{\rm adm}\,\Gamma}
\int\limits_{{\Bbb R}^n} \rho^p (x)\,dm(x)\,.
\end{equation*}
We set $M(\Gamma):=M_n(\Gamma).$ Let $x_0\in {\Bbb R}^n,$
$0<r_1<r_2<\infty,$
\begin{equation}\label{eq1ED}
S(x_0,r) = \{ x\,\in\,{\Bbb R}^n : |x-x_0|=r\}\,, \quad B(x_0, r)=\{
x\,\in\,{\Bbb R}^n : |x-x_0|<r\}\end{equation}
and
\begin{equation}\label{eq1**}
A=A(x_0, r_1,r_2)=\left\{x\,\in\,{\Bbb R}^n:
r_1<|x-x_0|<r_2\right\}\,.\end{equation}
Given sets $E,$ $F\subset\overline{{\Bbb R}^n}$ and a domain
$D\subset {\Bbb R}^n$ we denote by $\Gamma(E,F,D)$ a family of all
paths $\gamma:[a,b]\rightarrow \overline{{\Bbb R}^n}$ such that
$\gamma(a)\in E,\gamma(b)\in\,F $ and $\gamma(t)\in D$ for $t \in
(a, b).$ Let $S_i=S(x_0, r_i),$ $i=1,2,$ where spheres $S(x_0, r_i)$
centered at $x_0$ of the radius $r_i$ are defined in~(\ref{eq1ED}).
Let $Q:{\Bbb R}^n\rightarrow {\Bbb R}$ be a Lebesgue measurable
function satisfying the condition $Q(x)\equiv 0$ for $x\in{\Bbb
R}^n\setminus D.$ Let $p\geqslant 1.$ A mapping $f:D\rightarrow
\overline{{\Bbb R}^n}$ is called a {\it ring $Q$-mapping at the
point $x_0\in \overline{D}\setminus \{\infty\}$ with respect to
$p$-modulus}, if the condition
\begin{equation} \label{eq2*!A}
M_p(f(\Gamma(S_1, S_2, D)))\leqslant \int\limits_{A\cap D} Q(x)\cdot
\eta^p (|x-x_0|)\, dm(x)
\end{equation}
holds for some $r_0(x_0)>0,$ all $0<r_1<r_2<r_0$ and all Lebesgue
measurable functions $\eta:(r_1, r_2)\rightarrow [0, \infty]$ such
that
\begin{equation}\label{eqA2}
\int\limits_{r_1}^{r_2}\eta(r)\,dr\geqslant 1\,.
\end{equation}
The relation~(\ref{eq2*!A}) may be defined at $x_0=\infty$ with the
help of the inversion: by the definition, we say that $f$
satisfies~(\ref{eq2*!A}) whenever $\widetilde{f}=f\circ \psi$
satisfies~(\ref{eq2*!A}) at $x_0=0,$ while
$\psi(x)=\frac{x}{|x|^2}.$

\medskip A domain $D\subset {\Bbb R}^n,$ $n\geqslant 2,$ is called a
{\it uniform} domain with respect to $p$-modulus, $p\geqslant 1,$
if, for each $r>0,$ there is $\delta>0$ such that $M_{p}(\Gamma(F,
F^{\,*}, D))\geqslant\delta$ whenever $F$ and $F^{\,*}$ are continua
of $D$ with $h(F)\geqslant r$ and $h(F^{\,*})\geqslant r.$ Let $I$
be some set of indices. Domains $D_i,$ $i\in I,$ are said to be {\it
equi-uniform} domains with respect to $p$-modulus, if, for $r>0,$
the modulus condition above is satisfied by each $D_i$ with the same
number $\delta.$ It should be noted that the proposed concept of a
uniform domain has, generally speaking, no relation to definition,
introduced for the uniform domain in Martio-Sarvas sense~\cite{MSa}.

\medskip
Following to~\cite{Ri}, a {\it condenser} in ${\Bbb R}^n$ is a pair
$E=(A, C),$ where $A$ is open in ${\Bbb R}^n$ and $C\ne \varnothing$
is a compact subset of $A.$ If $1\leqslant p <\infty,$ the {\it
$p$-capacity of $E$} is defined by
$${\rm cap}_p E =\inf\limits_{u} \int\limits_A |\nabla u|^p\,dm\,,$$
where the infimum is taken over all nonnegative functions $u$ in
${\rm ACL}^p(A)$ with compact support in $A$ and $u|_C \geqslant 1.$
Let $F$ be a compact set in ${\Bbb R}^n.$ We say that $F$ is of {\it
$p$-capacity zero} if ${\rm cap}_p\,(A, F)=0$ for some (and hence
for all) bounded open set $A\supset F.$ An arbitrary set $E\subset
{\Bbb R}^n$ is of {\it $p$-capacity zero} if the same is true for
every compact subset of $E.$ In this case we write ${\rm cap}_p\,E =
0$ (${\rm cap}\, E=0$ if $p=n$), otherwise ${\rm cap}_p E > 0.$

\medskip
Let $Q:{\Bbb R}^n\rightarrow [0,\infty]$ be a Lebesgue measurable
function. We set
$$Q^{\,\prime}(x)=\left\{
\begin{array}{rr}
Q(x), &   Q(x)\geqslant 1\,, \\
1,  &  Q(x)<1\,.
\end{array}
\right.$$ Denote by $q^{\,\prime}_{x_0}$ the mean value of
$Q^{\,\prime}(x)$ over the sphere $|x-x_0|=r$, that means,
\begin{equation}\label{eq32*B}
q^{\,\prime}_{x_0}(r):=\frac{1}{\omega_{n-1}r^{n-1}}
\int\limits_{|x-x_0|=r}Q^{\,\prime}(x)\,d{\mathcal H}^{n-1}\,.
\end{equation}

\medskip
Given $p\geqslant 1,$ $\delta>0,$ closed sets $E, E_*, F$ in
$\overline{{\Bbb R}^n},$ $n\geqslant 2,$ a domain $D\subset {\Bbb
R}^n$ and a Lebesgue measurable function $Q:D\rightarrow [0,
\infty]$ let us denote by $\frak{R}^{E_*, E, F}_{Q, \delta, p}(D)$ a
family of open discrete mappings $f:D\rightarrow \overline{{\Bbb
R}^n}\setminus F$ satisfying the
conditions~(\ref{eq2*!A})--(\ref{eqA2}) at any point $x_0\in
\overline{D}$ such that:

\medskip
1) $C(f, \partial D)\subset E_*,$

\medskip
2) for each component $K$ of $D^{\,\prime}_f\setminus  E_*,$
$D^{\,\prime}_f:=f(D),$ there is a continuum $K_f\subset K$ such
that $h(K_f)\geqslant \delta$ and $h(f^{\,-1}(K_f), \partial
D)\geqslant \delta>0,$

\medskip
3) $f^{\,-1}(E_*)\subset E.$

\medskip
{\bf Theorem~B.} {\it\, Let $p\geqslant 1,$ let $D$ be a domain in
${\Bbb R}^n,$ $n\geqslant 2.$ Assume that:

\medskip
1) the set $E$ is nowhere dense in $D,$ and $D$ is finitely
connected on $E,$ i.e., for any $z_0\in E$ and any neighborhood
$\widetilde{U}$ of $z_0$ there is a neighborhood
$\widetilde{V}\subset \widetilde{U}$ of $z_0$ such that $(D\cap
\widetilde{V})\setminus E$ consists of finite number of components;

\medskip
2) for any $x_0\in\partial D$ there is $m=m(x_0)\in {\Bbb N},$
$1\leqslant m<\infty$ such that the following is true: for any
neighborhood $U$ of $x_0$ there is a neighborhood $V\subset U$ of
$x_0$ and  such that:

\medskip
2a) $V\cap D$ is connected,

\medskip
2b) $(V\cap D)\setminus E$ consists at most of $m$ components.

\medskip
Let for $p=n$ the set $F$ have positive capacity, and for $n-1<p<n$
it is an arbitrary closed set.

\medskip
Suppose that, for any $x_0\in\partial D$ at least one of the
following conditions is satisfied: $3_1)$ a function $Q$ has a
finite mean oscillation at $x_0;$ $3_2)$
$q_{x_0}(r)\,=\,O\left(\left[\log{\frac1r}\right]^{n-1}\right)$ as
$r\rightarrow 0;$ $3_3)$ the condition
\begin{equation*}\label{eq6D}
\int\limits_{0}^{\delta(x_0)}\frac{dt}{t^{\frac{n-1}{p-1}}
q_{x_0}^{\,\prime\,\frac{1}{p-1}}(t)}=\infty
\end{equation*}
holds for some $\delta(x_0)>0,$ where $q_{x_0}^{\,\prime}(t)$ is
defined in~(\ref{eq32*B}).

Let the family of all components of $D^{\,\prime}_f\setminus E_*$ is
equi-uniform over $f\in\frak{R}^{E_*, E, F}_{Q, \delta, p}(D)$ with
respect to $p$-modulus. Then every $f\in\frak{R}^{E_*, E, F}_{Q,
\delta, p}(D)$ has a continuous extension to $\partial D$ and the
family $\frak{R}^{E_*, E, F}_{Q, \delta, p}(\overline{D}),$
consisting of all extended mappings $\overline{f}:
\overline{D}\rightarrow \overline{{\Bbb R}^n},$ is equicontinuous
in~$\overline{D}.$ }

\medskip
Our goal is to establish Theorem~B for slightly different conditions
on the function~$Q,$ in particular, to study the case where the
function $Q$ may vary. Similar conditions for theorems of other
kinds have been considered by us previously, in particular for
theorems on the compactness of classes of solutions to the Dirichlet
problem; see, e.g., \cite{RS$_3$}, \cite{RSY} and \cite{DS$_2$}.

\medskip
Given $p\geqslant 1,$ $\delta>0,$ closed sets $E, E_*, F$ in
$\overline{{\Bbb R}^n},$ $n\geqslant 2,$ a domain $D\subset {\Bbb
R}^n,$ a number $M>0$ and an increasing convex function
$\Phi:\overline{{\Bbb R^{+}}}\rightarrow \overline{{\Bbb R^{+}}}$ we
denote by $\frak{R}^{E_*, E, F}_{M, \Phi, \delta, p}(D)$ a family of
open discrete mappings $f:D\rightarrow \overline{{\Bbb
R}^n}\setminus F$ for which there exists a Lebesgue measurable
function $Q=Q_f:D\rightarrow [0, \infty]$ satisfying the
conditions~(\ref{eq2*!A})--(\ref{eqA2}) at the point $x_0\in
\overline{D}$ for some $r_0=r_0(x_0)>0$ and any $0<r_1<r_2<r_0$ for which
\begin{equation}\label{eq1D}
\int\limits_{\Omega}\Phi(Q_f(x))\cdot\frac{dm(x)}{(1+|x|^2)^n}\leqslant
M
\end{equation}
and such that

\medskip
1) $C(f, \partial D)\subset E_*,$

\medskip
2) for each component $K$ of $D^{\,\prime}_f\setminus  E_*,$
$D^{\,\prime}_f:=f(D),$ there is a continuum $K_f\subset K$ such
that $h(K_f)\geqslant \delta$ and $h(f^{\,-1}(K_f), \partial
D)\geqslant \delta>0,$

\medskip
3) $f^{\,-1}(E_*)\subset E.$

\medskip
The following statement is true, cf. {\bf Theorems~A} and {\bf B.}

\medskip
\begin{theorem}\label{th1}
{\,\it Let $p\geqslant 1,$ let $D$ be a domain in
${\Bbb R}^n,$ $n\geqslant 2.$ Assume that:

\medskip
1) the set $E$ is nowhere dense in $D,$ and $D$ is finitely
connected on $E,$ i.e., for any $z_0\in E$ and any neighborhood
$\widetilde{U}$ of $z_0$ there is a neighborhood
$\widetilde{V}\subset \widetilde{U}$ of $z_0$ such that $(D\cap
\widetilde{V})\setminus E$ consists of finite number of components;

\medskip
2) for any $x_0\in\partial D$ there is $m=m(x_0)\in {\Bbb N},$
$1\leqslant m<\infty$ such that the following is true: for any
neighborhood $U$ of $x_0$ there is a neighborhood $V\subset U$ of
$x_0$ and  such that:

\medskip
2a) $V\cap D$ is connected,

\medskip
2b) $(V\cap D)\setminus E$ consists at most of $m$ components.

\medskip
Let for $p=n$ the set $F$ have positive capacity, and for $n-1<p<n$
it is an arbitrary closed set.

\medskip
Suppose that, the relation
\begin{equation}\label{eq2} \int\limits_{\delta_0}^{\infty}
\frac{d\tau}{\tau\left[\Phi^{-1}(\tau)\right]^{\frac{1}{p-1}}}=
\infty
\end{equation}
holds for some $\delta_0>\tau_0:=\Phi(0).$ Let the family of all components
of $D^{\,\prime}_f\setminus E_*$ is
equi-uniform over $f\in\frak{R}^{E_*, E, F}_{M, \Phi, \delta, p}(D))$ with
respect to $p$-modulus. Then every $f\in\frak{R}^{E_*, E, F}_{M, \Phi, \delta, p}(D)$
has a continuous extension to $\partial D$ and the
family $\frak{R}^{E_*, E, F}_{M, \Phi, \delta, p}(\overline{D}),$
consisting of all extended mappings $\overline{f}:
\overline{D}\rightarrow \overline{{\Bbb R}^n},$ is equicontinuous
in~$\overline{D}.$}
\end{theorem}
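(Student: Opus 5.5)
The plan is to reduce Theorem~\ref{th1} to the scheme behind Theorem~B. The only place where the individual majorant $Q$ enters that proof is through one quantitative fact. For each $x_0\in\partial D$ one needs, for $0<\varepsilon<\varepsilon_0$, an admissible function $\eta=\eta_\varepsilon$ on $(\varepsilon,\varepsilon_0)$ with $\int_\varepsilon^{\varepsilon_0}\eta\,dr=1$ such that
\begin{equation*}
\int\limits_{A(x_0,\varepsilon,\varepsilon_0)\cap D}Q(x)\,\eta^p(|x-x_0|)\,dm(x)\longrightarrow 0
\quad\text{as }\varepsilon\to 0 .
\end{equation*}
In Theorem~B this holds uniformly in $f$ because $Q$ is fixed. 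Here $Q=Q_f$ varies, so the first and main step is to prove the same smallness \emph{uniformly} over all $Q_f$ satisfying (\ref{eq1D}). Once this is in hand, the rest of the argument of Theorem~B goes through verbatim: extension, equicontinuity at boundary points, and equicontinuity at inner points. The conditions 1)--3) and the hypotheses on $D$, $E$, $F$ and equi-uniformity are identical.

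For the key step I would use the known estimate of Ryazanov--Sevost'yanov type (cf.\ \cite{RS$_3$}, \cite{RSY}, \cite{DS$_2$}). Fix $x_0$ and take $\varepsilon_0<1$ with $\overline{B(x_0,\varepsilon_0)}$ bounded; then $(1+|x|^2)^{-n}$ is bounded below there by a constant $c>0$. So $\int_{B(x_0,\varepsilon_0)}\Phi(Q_f)\,dm\le M/c$ uniformly in $f$. Choose
\begin{equation*}
\eta(t)=\frac{1}{I(\varepsilon,\varepsilon_0)\,t^{\frac{n-1}{p-1}}q_{x_0,f}^{\prime\,\frac1{p-1}}(t)},\qquad
I(\varepsilon,\varepsilon_0)=\int_\varepsilon^{\varepsilon_0}\frac{dt}{t^{\frac{n-1}{p-1}}q_{x_0,f}^{\prime\,\frac1{p-1}}(t)} .
\end{equation*}
Fubini then gives the bound $\omega_{n-1}/I(\varepsilon,\varepsilon_0)^{p-1}$ for the right-hand side of (\ref{eq2*!A}). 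Hence it suffices to show that $I(\varepsilon,\varepsilon_0)\to\infty$ as $\varepsilon\to0$ \emph{uniformly} in $f$.

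This is where (\ref{eq2}) enters, and it is the main obstacle. I would follow the classical lemma: if $\int_{B(x_0,\varepsilon_0)}\Phi(Q)\,dm\le M'$ and (\ref{eq2}) holds, then $\int_0^{\varepsilon_0}\frac{dt}{t^{\frac{n-1}{p-1}}q^{\prime\frac1{p-1}}(t)}=\infty$. Moreover, the partial integrals $I(\varepsilon,\varepsilon_0)$ are bounded below by a function of $\varepsilon$ that depends only on $M'$, $\Phi$, $n$, $p$ and tends to infinity.

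The proof goes through the change of variables $t\mapsto$ "level" $\tau$, Jensen's inequality for the convex $\Phi$ on spheres, giving $\Phi(q'(t))\le$ spherical mean of $\Phi(Q')$, and the inequality $\int \frac{d\tau}{\tau[\Phi^{-1}(\tau)]^{1/(p-1)}}=\infty$. Since every constant in that argument depends only on $M'$, the divergence is uniform in the class. To handle $Q'$ rather than $Q$, replace $\Phi$ by $\Phi$ restricted to $[1,\infty)$; this preserves (\ref{eq2}) because $\delta_0>\Phi(0)$ can be shifted.

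With the uniform estimate established, I would repeat the proof of Theorem~B step by step. First, at each $x_0\in\partial D$, use 2a)--2b) and the finite connectedness on $E$ to pass to the at most $m$ components of $(V\cap D)\setminus E$. On these components $f$ is closed onto components of $D'_f\setminus E_*$ by 1) and 3), so path-lifting is available. Then combine the lower bound for $M_p$ coming from equi-uniformity and the continua $K_f$ of condition 2) with the uniformly small upper bound above. This gives a contradiction with a positive chordal oscillation of $f$ near $x_0$, which yields both the continuous extension and equicontinuity on $\partial D$. Finally, equicontinuity inside $D$ follows from the same uniform upper bound at interior points together with the capacity hypothesis on $F$ for $p=n$, or the case $n-1<p<n$. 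The interior argument is a standard Poletsky-type lemma for families omitting $F$.
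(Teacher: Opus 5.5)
Your overall route is the paper's. The only new ingredient compared with Theorem~B is that $I(\varepsilon,\varepsilon_0)\to\infty$ as $\varepsilon\to 0$ uniformly over all $Q_f$ obeying (\ref{eq1D}). The paper gets this from Lemma~\ref{lem1} combined with Remark~\ref{rem1} (passage from $Q$ to $Q^{\,\prime}$), exactly as you propose. Interior equicontinuity is likewise quoted from the same source as Lemma~\ref{lem1}.

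\textbf{The closedness claim in the boundary step is false.} Hypothesis 3) is only the inclusion $f^{-1}(E_*)\subset E$. Under it, $f$ restricted to a component of $D\setminus E$ (or of $(V\cap D)\setminus E$) need not be closed onto a component of $D^{\,\prime}_f\setminus E_*$. For example, take $f={\rm id}$ on $D={\Bbb D}$, $E_*=\partial{\Bbb D}$ and $E$ a diameter. Then $f^{-1}(E_*)=\varnothing\subset E$, yet the upper half-disk is mapped onto a non-closed subset of the single component ${\Bbb D}$. This is why Theorem~\ref{th3}, whose proof does lift inside components of $D\setminus E$, assumes the equality $f^{-1}(E_*)=E$. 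Moreover, liftings confined to such components near $x_0$ could never reach $f^{-1}(K_f)$, which stays at chordal distance $\geqslant\delta$ from $\partial D$.

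\textbf{Closedness is not needed.} Conditions 2a)--2b) enter only through Lemma~\ref{lem2}. It gives a path $\gamma_k\subset V_k\cap D$ between $x_k,x^{\,\prime}_k\notin E$ with at most $m-1$ points in $E$. Hence $f_k(\gamma_k)$ meets $E_*$ at most $m-1$ times. It therefore contains a continuum $\widetilde G_k$ with $h(\widetilde G_k)\geqslant\varepsilon_0/(4m)$ lying in one component $K_k$ of $D^{\,\prime}_{f_k}\setminus E_*$. The maximal liftings of Proposition~\ref{pr3} are then taken in all of $D$. They are complete, because a lifting tending to $\partial D$ would force a limit value in $C(f_k,\partial D)\subset E_*$, whereas the image paths, endpoints included, stay in $K_k$.

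\textbf{The point at infinity.} Your bounded ball $B(x_0,\varepsilon_0)$ does not cover $x_0=\infty\in\partial D$ when $D$ is unbounded. There you need the inversion $x\mapsto x/|x|^2$ and the invariance of the measure $(1+|x|^2)^{-n}\,dm(x)$ under it, as in the paper.
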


\section{Preliminaries}

The following lemma was proved in \cite[Lemma~2.1]{DS$_1$}.

\begin{lemma}\label{lem2}
{\it\, Let $D$ be a domain in ${\Bbb R}^n,$ $n\geqslant 2,$ and let
$x_0\in \partial D.$ Assume that $E$ is closed and nowhere dense in
$D,$ and $D$ is finitely connected on $E,$ i.e., for any $z_0\in E$
and any neighborhood $\widetilde{U}$ of $z_0$ there is a
neighborhood $\widetilde{V}\subset \widetilde{U}$ of $z_0$ such that
$(D\cap \widetilde{V})\setminus E$ consists of finite number of
components.

In addition, assume that the following condition holds: for any
neighborhood $U$ of $x_0$ there is a neighborhood $V\subset U$ of
$x_0$ such that:

\medskip
a) $V\cap D$ is connected,

\medskip
b) $(V\cap D)\setminus E$ consists at most of $m$ components,
$1\leqslant m<\infty.$

Let $x_k, y_k\in D\setminus E,$ $k=1,2,\ldots ,$ be a sequences
converging to $x_0$ as $k\rightarrow\infty.$ Then there are
subsequences $x_{k_l}$ and $y_{k_l},$ $l=1,2,\ldots ,$ belonging to
some sequence of neighborhoods $V_l,$ $l=1,2,\ldots ,$ of the point
$x_0$ such that $V_l\subset B(x_0, 2^{\,-l}),$ $l=1,2,\ldots ,$ and,
in addition, any pair $x_{k_l}$ and $y_{k_l}$ may be joined by a
path $\gamma_l$ in $V_l\cap D,$ where $\gamma_l$ contains at most
$m-1$ points in~$E.$ }
\end{lemma}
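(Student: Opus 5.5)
The plan is to build the neighborhoods $V_l$ directly from hypotheses a)--b), pick the subsequences by a diagonal choice, and then join $x_{k_l}$ to $y_{k_l}$ by a path that moves through the at most $m$ components of $(V_l\cap D)\setminus E$ along a shortest chain. A shortest chain visits each component at most once, so it crosses $E$ at most $m-1$ times.

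\emph{Choice of $V_l$ and of the subsequences.} For each $l$ apply hypotheses a)--b) to $U=B(x_0,2^{-l})$. This gives a neighborhood $V_l\subset B(x_0,2^{-l})$ such that $V_l\cap D$ is connected and $(V_l\cap D)\setminus E$ has components $G_1,\dots ,G_s$ with $s\leqslant m$. We may assume $V_l$ is open. Since $x_k\to x_0$ and $y_k\to x_0$, all but finitely many $x_k,y_k$ lie in $V_l$. Hence we can choose $k_1<k_2<\dots$ inductively with $x_{k_l},y_{k_l}\in V_l\setminus E$.

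\emph{Adjacency graph.} Fix $l$ and let $\Gamma_l$ be the graph on the vertices $G_1,\dots ,G_s$. Put an edge between $G_i$ and $G_j$ when there is $z\in E\cap V_l\cap D$ lying in $\overline{G_i}\cap\overline{G_j}$. Since $E$ is closed and nowhere dense in $D$, the set $\bigcup_i G_i$ is dense in $V_l\cap D$, so $V_l\cap D=\bigcup_i(\overline{G_i}\cap V_l\cap D)$.
\begin{itemize}
\item Suppose $\Gamma_l$ split into two nonempty vertex groups with no edges between them. Then the unions of the relative closures of the two groups would be disjoint, nonempty and relatively closed, and they would cover $V_l\cap D$.
\item This contradicts the connectedness of $V_l\cap D$.
\end{itemize}
Hence $\Gamma_l$ is connected. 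Take a shortest chain in $\Gamma_l$ from the component containing $x_{k_l}$ to the one containing $y_{k_l}$; it has at most $s-1\leqslant m-1$ edges.

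\emph{Realizing an edge by a path (the main obstacle).} For an edge $G_i\sim G_j$ witnessed by $z$, we need a path in $V_l\cap D$ from a point of $G_i$ to a point of $G_j$ that meets $E$ only at $z$. This is an accessibility statement, and I expect it to be the main difficulty. To prove it, use the finite connectedness of $D$ on $E$ at $z$.
\begin{itemize}
\item Choose nested neighborhoods $W_1\supset W_2\supset\dots$ of $z$, each inside $V_l$, with diameters tending to $0$, such that each $(D\cap W_q)\setminus E$ has finitely many components.
\item Each such component lies in a single $G_r$. Since $z\in\overline{G_i}$, for each $q$ some component of $(D\cap W_q)\setminus E$ inside $G_i$ has $z$ in its closure.
\item By finiteness and a K\"onig-type pigeonhole argument, select a nested sequence of such components $C_1\supset C_2\supset\dots$.
\item Pick $a_q\in C_q$ and join $a_q$ to $a_{q+1}$ inside $C_q$, which is open and connected, hence path connected. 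Concatenating these arcs on time intervals $[1-2^{-q+1},1-2^{-q}]$ gives a path in $G_i$ that converges to $z$, since the diameters shrink.
\end{itemize}
Doing the same on the $G_j$ side and joining the two paths at $z$ gives the required crossing, which meets $E$ in exactly one point.

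\emph{Assembling the path $\gamma_l$.} Inside each $G_r$ of the chain, connect the entry and exit points by a path in $G_r$; this is possible because $G_r$ is an open connected set and so path connected. The concatenation is a path $\gamma_l$ in $V_l\cap D$ joining $x_{k_l}$ and $y_{k_l}$. It meets $E$ only at the crossing points of the chain, hence in at most $m-1$ points. This proves the lemma.
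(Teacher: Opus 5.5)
The paper does not prove this lemma; it quotes it from the authors' earlier work (Lemma~2.1 there), so there is no in-text argument to compare yours with. Judged on its own, your proof is correct and complete.

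The two ingredients fit together as needed. Connectedness of $V_l\cap D$ forces the adjacency graph on the at most $m$ components of $(V_l\cap D)\setminus E$ to be connected, and a shortest chain in it has at most $m-1$ edges. At each crossing point $z$, the K\"onig-type choice of nested components $C_1\supset C_2\supset\dots$, which exists because $D$ is finitely connected on $E$, turns each edge into a path meeting $E$ only at $z$.

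Two details deserve to be written out.

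First, the argument relies on the neighborhoods $V_l$ and $W_q$ being open. This is the standard convention in this literature, but it cannot be arranged after the fact by passing to interiors, since that may destroy a) and b). Openness makes $(V_l\cap D)\setminus E$ and $(D\cap W_q)\setminus E$ open, so their components are open and path connected. It is also exactly what makes your two unions of relative closures disjoint. A point of $\overline{G_i}\cap\overline{G_j}\cap V_l\cap D$ outside $E$ would lie in some open $G_r$ meeting both $G_i$ and $G_j$, forcing $i=r=j$. So relative closures of distinct components meet only in $E$, and this should be said explicitly.

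Second, the condition $z\in\overline{C_q}$ is not actually needed in the K\"onig step. At level $q$, take the components of $(D\cap W_q)\setminus E$ contained in $G_i$. It suffices that each level is finite and nonempty (nonempty because $W_q\cap G_i\neq\varnothing$). It also suffices that each such component at level $q+1$ lies in one at level $q$, which is again contained in $G_i$. Continuity of the concatenated path at $z$ then comes only from ${\rm diam}\,W_q\to 0$.
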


Let $D\subset {\Bbb R}^n,$ $f:D\rightarrow {\Bbb R}^n$ be a discrete
open mapping, $\beta: [a,\,b)\rightarrow {\Bbb R}^n$ be a path, and
$x\in\,f^{\,-1}(\beta(a)).$ A path $\alpha: [a,\,c)\rightarrow D$ is
called a {\it maximal $f$-lifting} of $\beta$ starting at $x,$ if
$(1)\quad \alpha(a)=x\,;$ $(2)\quad f\circ\alpha=\beta|_{[a,\,c)};$
$(3)$\quad for $c<c^{\prime}\leqslant b,$ there is no a path
$\alpha^{\prime}: [a,\,c^{\prime})\rightarrow D$ such that
$\alpha=\alpha^{\prime}|_{[a,\,c)}$ and $f\circ
\alpha^{\,\prime}=\beta|_{[a,\,c^{\prime})}.$ Here and in the
following we say that a path $\beta:[a, b)\rightarrow
\overline{{\Bbb R}^n}$ converges to the set $C\subset
\overline{{\Bbb R}^n}$ as $t\rightarrow b,$ if $h(\beta(t),
C)=\sup\limits_{x\in C}h(\beta(t), C)\rightarrow 0$ at $t\rightarrow
b.$ The following is true (see~\cite[Lemma~3.12]{MRV$_2$}).

\medskip
\begin{proposition}\label{pr3}
{\it\, Let $f:D\rightarrow {\Bbb R}^n,$ $n\geqslant 2,$ be an open
discrete mapping, let $x_0\in D,$ and let $\beta: [a,\,b)\rightarrow
{\Bbb R}^n$ be a path such that $\beta(a)=f(x_0)$ and such that
either $\lim\limits_{t\rightarrow b}\beta(t)$ exists, or
$\beta(t)\rightarrow \partial f(D)$ as $t\rightarrow b.$ Then
$\beta$ has a maximal $f$-lifting $\alpha: [a,\,c)\rightarrow D$
starting at $x_0.$ If $\alpha(t)\rightarrow x_1\in D$ as
$t\rightarrow c,$ then $c=b$ and $f(x_1)=\lim\limits_{t\rightarrow
b}\beta(t).$ Otherwise $\alpha(t)\rightarrow \partial D$ as
$t\rightarrow c.$}
\end{proposition}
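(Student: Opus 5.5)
The statement immediately above is Proposition~\ref{pr3}, the existence and behaviour of maximal $f$-liftings for an open discrete map. I will prove it in three steps: build the lifting by a Zorn-type maximality argument, show that a lifting with a limit point in $D$ can be continued, and show that otherwise the lifting leaves every compact subset of $D$.

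\emph{Step 1: local liftings.} Around each $x\in D$, discreteness and openness give normal neighbourhoods. These are domains $U\ni x$ with $\overline{U}\cap f^{-1}(f(x))=\{x\}$, $\overline U$ compact in $D$, and $\partial f(U)=f(\partial U)$; they can be taken arbitrarily small. For any path $\beta$ starting at $f(x)$ that stays in $f(U)$, the lifting lemma for open discrete maps (the standard local result proved via the degree/index theory in Rickman's book) gives a lifting starting at $x$ contained in $U$. Because $\partial f(U)=f(\partial U)$, the lift cannot exit $U$ while $\beta$ remains in $f(U)$.

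\emph{Step 2: existence of a maximal lifting.} Consider all liftings $\alpha:[a,c)\to D$ of $\beta|_{[a,c)}$ with $\alpha(a)=x_0$, ordered by extension. Step 1 shows this family is nonempty. A chain has an upper bound given by the union of its members, whose domain is $[a,\sup c)$. By Zorn's lemma there is a maximal element $\alpha:[a,c)\to D$.

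\emph{Step 3: the limit-point dichotomy.} Suppose $\alpha$ has a limit point $x_1\in D$ as $t\to c$, meaning $\alpha(t_k)\to x_1$ for some $t_k\to c$.

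First, the limit $\lim_{t\to c}\beta(t)$ exists and equals $f(x_1)$. If $c<b$, this is continuity of $\beta$. If $c=b$, the limit exists by hypothesis; in the alternative case $\beta(t)\to\partial f(D)$, we would get $f(x_1)\in\partial f(D)$. That is impossible, since $f(D)$ is open and $f(x_1)\in f(D)$.

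Second, $\alpha(t)\to x_1$ along the whole interval. Take a small normal neighbourhood $U$ of $x_1$. For $t$ near $c$ we have $\beta(t)\in f(U)$ and $\beta(t)$ close to $f(x_1)$. Once some $\alpha(t_k)\in U$, the path $\alpha$ cannot leave $U$ afterwards, because $f(\partial U)=\partial f(U)$ is not met by $\beta$. So $\alpha(t)\in U$ for all $t$ near $c$. Shrinking $U$ gives $\alpha(t)\to x_1$, using that $\{x_1\}=\bigcap\overline{U}$.

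Third, $c=b$. If $c<b$, set $\alpha(c)=x_1$ and apply Step 1 at $x_1$ to $\beta|_{[c,c+\varepsilon]}$. This extends $\alpha$ beyond $c$ and contradicts maximality. Hence $c=b$, $\alpha(t)\to x_1$, and $f(x_1)=\lim_{t\to b}\beta(t)$.

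If $\alpha$ has no limit point in $D$, then every limit point of $\alpha(t)$ as $t\to c$ lies in $\partial D$. By compactness of $\overline{{\Bbb R}^n}$, this means $h(\alpha(t),\partial D)\to 0$, which is the claim $\alpha(t)\to\partial D$.

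The main obstacle is the second sub-step of Step 3: upgrading a mere limit point to an actual limit. This is exactly where the normal-neighbourhood property $\partial f(U)=f(\partial U)$ is indispensable. It is what excludes oscillation of $\alpha$ between $x_1$ and other preimages of $f(x_1)$, and it relies on discreteness through Step 1.
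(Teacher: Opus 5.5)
Your proof is correct, but the comparison is mainly with a citation: the paper gives no proof of Proposition~\ref{pr3} and simply imports Lemma~3.12 of [MRV$_2$]. What you have written is a reconstruction of the standard argument behind that lemma. It uses Zorn's lemma over partial liftings, the existence of local liftings inside normal neighbourhoods, and an analysis of the accumulation points of $\alpha$ at $c$. The local lifting in a normal neighbourhood is the one genuinely nontrivial ingredient, and you quote it rather than prove it.

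What your version gains over the bare citation is that the role of each hypothesis becomes visible. Openness of $f$ is what excludes $\beta(t)\to\partial f(D)$ once $\alpha$ accumulates at a point of $D$. Discreteness enters through normal neighbourhoods, which you use both for extending past $c<b$ and for upgrading an accumulation point to a limit.

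One correction to your closing remark: the property $\partial f(U)=f(\partial U)$ is not indispensable for that upgrade. Once $\beta(t)\to f(x_1)$ is known, the cluster set $C(\alpha,c)=\bigcap_{a\leqslant s<c}\overline{\alpha([s,c))}$ is a continuum in $\overline{{\Bbb R}^n}$ with $C(\alpha,c)\cap D\subset f^{-1}(f(x_1))$. Since this fibre is discrete, you can choose $r$ with $\overline{B(x_1,r)}\subset D$ and $\overline{B(x_1,r)}\cap f^{-1}(f(x_1))=\{x_1\}$. Then $C(\alpha,c)$ misses $S(x_1,r)$ and meets $\overline{B(x_1,r)}$ only at $x_1$, so connectedness forces $C(\alpha,c)=\{x_1\}$, i.e.\ $\alpha(t)\to x_1$. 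This uses only continuity and discreteness of the fibre. Normal neighbourhoods, and hence openness together with discreteness, are really needed only for the local liftings in Step~1, which you invoke again when extending $\alpha$ past $c$.
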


\medskip
The following statement was proved in~\cite[Lemma~2.1]{Sev$_5$}.

\medskip
\begin{lemma}\label{lem1}
{\it\, Let $1\leqslant p\leqslant n,$ and let $\Phi:[0,
\infty]\rightarrow [0, \infty] $ be a strictly increasing convex
function such that the relation~(\ref{eq2}) holds for some
$\delta_0>\tau_0:=\Phi(0).$ Let $\frak{Q}$ be a family of functions
$Q:{\Bbb R}^n\rightarrow [0, \infty]$ such that
\begin{equation}\label{eq5A}
\int\limits_D\Phi(Q(x))\frac{dm(x)}{\left(1+|x|^2\right)^n}\
\leqslant M_0<\infty
\end{equation}
for some $0<M_0<\infty.$ Now, for any $0<r_0<1$ and for every
$\sigma>0$ there exists $0<r_*=r_*(\sigma, r_0, \Phi)<r_0$ such that
\begin{equation}\label{eq4}
\int\limits_{\varepsilon}^{r_0}\frac{dt}{t^{\frac{n-1}{p-1}}q^{\frac{1}{p-1}}_{x_0}(t)}\geqslant
\sigma\,,\qquad \varepsilon\in (0, r_*)\,,
\end{equation}
for any $Q\in \frak{Q}.$ }
\end{lemma}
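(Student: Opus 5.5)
The statement immediately above is Lemma~\ref{lem1}: for $Q$ in the family $\frak{Q}$ bounded by~(\ref{eq5A}), the integral in~(\ref{eq4}) is large uniformly in $Q$. I would prove it by the standard Ryazanov--Sevost'yanov route. Fix $x_0$, $0<r_0<1$ and $\sigma>0$. The plan is to bound the integral from below by an expression that depends only on $\Phi$, $M_0$ and $\varepsilon$. That expression should diverge as $\varepsilon\to 0$ by virtue of~(\ref{eq2}).

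The steps, in order, are as follows.

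\textbf{Step 1 (averaging Jensen).} Let $A_\varepsilon=A(x_0,\varepsilon,r_0)$. Assume first that $x_0$ lies in a bounded region, so that $(1+|x|^2)^{-n}\geqslant c_0>0$ on $B(x_0,1)$; the case $x_0=\infty$ follows by inversion. On the annulus use the probability measure
$$d\mu=\frac{dm(x)}{\omega_{n-1}\,|x-x_0|^n\log(r_0/\varepsilon)}.$$
Jensen's inequality for convex $\Phi$ in polar coordinates, together with~(\ref{eq5A}), gives
$$\Phi\left(\int_\varepsilon^{r_0}q_{x_0}(t)\,\frac{dt}{t\log(r_0/\varepsilon)}\right)\leqslant \frac{M_0}{c_0\,\omega_{n-1}\,\varepsilon^n\log(r_0/\varepsilon)}.$$
To get a usable estimate on each scale, I would instead apply Jensen on each dyadic annulus $A(x_0,2^{-k-1},2^{-k})$. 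This yields $\Phi(\bar q_k)\leqslant C M_0 2^{kn}$, where $\bar q_k$ is the weighted mean of $q_{x_0}$ over $t\in(2^{-k-1},2^{-k})$.

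\textbf{Step 2 (H\"older on each dyadic piece).} On $(2^{-k-1},2^{-k})$, H\"older's (or Jensen's, for the convex map $s\mapsto s^{-1/(p-1)}$) inequality gives
$$\int_{2^{-k-1}}^{2^{-k}}\frac{dt}{t^{\frac{n-1}{p-1}}q_{x_0}^{\frac1{p-1}}(t)}\geqslant c\,2^{k\frac{n-p}{p-1}}\,\bar q_k^{-\frac1{p-1}}\geqslant c\,\left[\Phi^{-1}(CM_02^{kn})\right]^{-\frac1{p-1}},$$
where the factor $2^{k(n-p)/(p-1)}\geqslant 1$ because $p\leqslant n$. Summing over $k$ from $k_0\sim\log_2(1/r_0)$ to $k_1\sim\log_2(1/\varepsilon)$ produces a sum $\sum_k[\Phi^{-1}(CM_02^{kn})]^{-1/(p-1)}$. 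With the substitution $\tau=CM_02^{kn}$, for which $d\tau/\tau\approx n\log 2\,dk$, this sum is comparable to $\int^{\tau_1}\frac{d\tau}{\tau[\Phi^{-1}(\tau)]^{1/(p-1)}}$.

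\textbf{Step 3 (conclusion).} By~(\ref{eq2}) the resulting integral tends to $\infty$ as $\varepsilon\to0$, and the bound depends only on $\Phi$, $M_0$, $r_0$ and $n$. One can therefore choose $r_*$ so that the sum is at least $\sigma$ whenever $\varepsilon<r_*$, uniformly over $Q\in\frak{Q}$.

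\textbf{Main obstacle.} The delicate point is the comparison between the series and the integral, because $\Phi^{-1}$ is monotone. This monotonicity makes the comparison hold up to constants. The constant $C$ inside $\Phi^{-1}$ must be absorbed by a change of variables. The lower limit $\delta_0>\Phi(0)$ must be respected, which is done by discarding finitely many terms, i.e.\ by taking $k$ large.

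A second issue is the use of $Q'$ versus $Q$. If $q$ means $q'$, then $Q'\geqslant1$ keeps the means finite and positive. One then works with $\Phi$ evaluated at $\max(Q,1)$, and $\Phi(\max(Q,1))\leqslant\Phi(Q)+\Phi(1)$ keeps~(\ref{eq5A}) valid with $M_0$ replaced by $M_0+C\Phi(1)$. Finally, if $p=1$ the exponent $1/(p-1)$ is meaningless, so I would treat $1<p\leqslant n$.
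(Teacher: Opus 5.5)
Your argument is correct, and it takes a genuinely different route from the one behind this lemma.

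\textbf{What the paper does.} The paper gives no proof; it only quotes \cite[Lemma~2.1]{Sev$_5$}. The argument behind that reference is the usual Ryazanov--Srebro--Yakubov one. Since $t<r_0<1$ and $p\leqslant n$, one has $t^{-\frac{n-1}{p-1}}\geqslant t^{-1}$. This reduces (\ref{eq4}) to a lower bound for $\int_\varepsilon^{r_0}\frac{dt}{t\,q_{x_0}^{1/(p-1)}(t)}$, which is estimated in one stroke by a continuous inequality of the type
\[
\int_\varepsilon^{r_0}\frac{dt}{t\,q_{x_0}^{\frac{1}{p-1}}(t)}\geqslant\frac{1}{n}\int_{eM_*}^{M_*(r_0/\varepsilon)^n}\frac{d\tau}{\tau\left[\Phi^{-1}(\tau)\right]^{\frac{1}{p-1}}}\,.
\]
Here $M_*$ is a $\Phi$-mean of $Q$ near $x_0$. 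It is bounded in terms of $M_0$ and $x_0$ because $(1+|x|^2)^{-n}$ is bounded below there, and (\ref{eq2}) then finishes.

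\textbf{What you do instead.} You apply Jensen twice on each dyadic shell and then compare a monotone series with the integral. The details check out:
\begin{itemize}
\item The weight $\omega_{n-1}t^{n-1}dt/m(A_k)$ gives the constant $c=(1-2^{-n})/n$.
\item Your factor $2^{k(n-p)/(p-1)}\geqslant 1$ plays exactly the role of $t^{-(n-1)/(p-1)}\geqslant t^{-1}$.
\item For the non-increasing $g(s)=[\Phi^{-1}(CM_02^{sn})]^{-1/(p-1)}$ one has $g(k)\geqslant\int_k^{k+1}g$, which is the needed direction.
\end{itemize}

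\textbf{What each route buys.} Your version is self-contained. It shows transparently that only the tail of (\ref{eq2}) at $\tau=\infty$ matters, so the constants inside $\Phi^{-1}$ and the threshold $\delta_0$ are harmless. The ``main obstacle'' you flag is in fact routine. The quoted inequality, in exchange, gives an explicit quantitative bound at every scale, and that is the form reused throughout this literature.

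\textbf{One bookkeeping point.} (\ref{eq5A}) integrates only over $D$, while your shells $A_k$ may leave $D$. So you need the paper's convention $Q\equiv 0$ off $D$, hence $Q^{\,\prime}\equiv 1$ there. This adds at most $\Phi(1)$ to the shell mean, giving $\Phi(\bar q_k)\leqslant CM_02^{kn}+\Phi(1)$. That is harmless after the substitution $\tau=CM_02^{sn}+\Phi(1)$, since then $d\tau\leqslant n\log 2\,\tau\,ds$. Also, your $r_*$ depends on $x_0$ and $M_0$ through $c_0$ and $C$; the notation $r_*(\sigma,r_0,\Phi)$ suppresses this.
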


\medskip
\begin{remark}\label{rem1}
We set
$$Q^{\,\prime}(x)=\begin{cases}Q(x), & Q(x)\geqslant 1\\ 1, & Q(x)<1\end{cases}\,.$$
Observe that $Q^{\,\prime}(x)$ satisfies (\ref{eq5A}) up to a
constant, see, e.g., proof of Theorem~1.3 in \cite{Sev$_5$}.
\end{remark}

\section{Proof of Theorem~\ref{th1}}

Mainly we use our approach applied in~\cite{DS$_1$}. The
equicontinuity of $\frak{R}^{E_*, E, F}_{M, \Phi, \delta, p}(D)$ in
$D$ follows by~\cite[Theorems~1.1, 1.2]{Sev$_5$}.

\medskip
Now, we prove the following statement: for any $x_0\in \partial D,$
$x_0\ne \infty,$ and for every $\varepsilon>0$ there is
$\delta=\delta(\varepsilon, x_0)>0$ such that
\begin{equation}\label{eq1}
h(f(x), f(y))<\varepsilon
\end{equation}
for any $f\in\frak{R}^{E_*, E, F}_{M, \Phi, \delta, p}(D)$ and any
$x, y\in B(x_0, \delta)\cap D.$

\medskip
Suppose the opposite. Then there is $x_0\in\partial D,$
$\varepsilon_0>0,$ a sequences $x_k, x^{\,\prime}_k\in D,$ $x_k,
x^{\,\prime}_k\rightarrow x_0$ as $k\rightarrow \infty,$ and $f_k\in
\frak{R}^{E_*, E, F}_{M, \Phi, \delta, p}(D)$ such that
\begin{equation}\label{eq13A}
h(f_k(x_k), f_k(x^{\,\prime}_k))\geqslant \varepsilon_0/2\,.
\end{equation}
By the assumption~2), there exists a sequence of neighborhoods
$V_k\subset B(x_0, 2^{\,-k}),$ $k=1,2,\ldots ,$ such that $V_k\cap
D$ is connected and $(V_k\cap D)\setminus E$ consists of $m$
components, $1\leqslant m<\infty.$

We note that the points $x_k$ and $x^{\,\prime}_k,$ $k=1,2,\ldots, $
may be chosen such that $x_k, x^{\,\prime}_k\not\in E.$ Indeed,
since under condition~1) the set $E$ is nowhere dense in $D,$ there
exists a sequence $x_{ki}\in D\setminus E,$ $i=1,2,\ldots ,$ such
that $x_{ki}\rightarrow x_k$ as $i\rightarrow\infty.$ Put
$\varepsilon>0.$ Due to the continuity of the mapping $f_k$ at the
points $x_k$ and $x^{\,\prime}_k,$ for the number $k\in {\Bbb N}$
there are numbers $i_k, j_k\in {\Bbb N}$ such that $h(f_k(x_{ki_k}),
f_k(x_k))<\frac{1}{2^k}$ and $h(f_k(x^{\,\prime}_{kj_k}),
f_k(x^{\,\prime}_k))<\frac{1}{2^k}.$ Now, by~(\ref{eq13A}) and by
the triangle inequality,
\begin{gather}\nonumber
h(f_k(x_{ki_k}), f_k(x^{\,\prime}_{kj_k}))\geqslant
h(f_k(x^{\,\prime}_k), f_k(x_{ki_k}))-h(f_k(x^{\,\prime}_k),
f_k(x^{\,\prime}_{kj_k})) \\
\label{eq14B}\geqslant h(f_k(x_k), f_k(x^{\,\prime}_k))-h(f_k(x_k),
f_k(x_{ki_k}))-h(f_k(x^{\,\prime}_k), f_k(x^{\,\prime}_{kj_k}))
\\
\nonumber\geqslant\varepsilon_0/2-\frac{2}{2^k}\geqslant
\varepsilon_0/4
\end{gather}
for sufficiently large $k.$ Due to~(\ref{eq14B}), we may assume that
$x_k, x^{\,\prime}_k\not\in E,$  as required.

\medskip
Now, by Lemma~\ref{lem1} there are subsequences $x_{k_l}$ and
$x^{\,\prime}_{k_l},$ $l=1,2,\ldots ,$ belonging to some sequence of
neighborhoods $V_l,$ $l=1,2,\ldots ,$ of the point $x_0$ such that
${\rm diam\,}V_l\rightarrow 0$ as $l\rightarrow\infty$ and, in
addition, any pair $x_{k_l}$ and $x^{\,\prime}_{k_l}$ may be joined
by a path $\gamma_l$ in $V_l\cap D,$ where $\gamma_l$ contains at
most $m-1$ points in $E.$ Without loss of generality, we may assume
that the same sequences $x_k$ and $y_k$ satisfy properties mentioned
above. Let $\gamma_k:[0, 1]\rightarrow D,$ $\gamma_k(0)=x_k$ and
$\gamma_k(1)=y_k,$ $k=1,2,\ldots .$

\medskip
Observe that, the path $f_k(\gamma_k)$ contains not more than $m-1$
points in $E_*.$ In the contrary case, there are at least $m$ such
points $b_{1}=f_k(\gamma_k(t_1)), b_{2}=f_k(\gamma_k(t_2)),\ldots,
b_m=f_k(\gamma_k(t_m)),$ $0\leqslant t_1\leqslant t_2\leqslant
\ldots\leqslant t_m\leqslant 1.$ But now the points
$a_{1}=\gamma_k(t_1), a_{2}=\gamma_k(t_2),\ldots, a_m=\gamma_k(t_m)$
are in $f_k^{\,-1}(E_*)\subset E$ and simultaneously belong to
$\gamma_k.$ This contradicts the definition of $\gamma_k.$

\medskip
Let
$$b_{1}=f_k(\gamma_k(t_1)), b_{2}=f_k(\gamma_k(t_2))\quad,\ldots,\quad
b_l=f_k(\gamma_k(t_l))\,,$$ $$t_0:=0\leqslant t_1\leqslant
t_2\leqslant \ldots\leqslant t_l\leqslant 1=t_{l+1},\qquad
0\leqslant l\leqslant m-1\,,$$
be points in $f_k(\gamma_k)\cap E_*.$ By the relation~(\ref{eq13A})
and due to the triangle inequality,
\begin{equation}\label{eq6C}
\varepsilon_0/2\leqslant h(f_k(x_k),
f_k(y_k))\leqslant\sum\limits_{r=0}^{l} h(f_k(\gamma_k(t_r)),
f_k(\gamma_k(t_{r+1}))\,.
\end{equation}
It follows from~(\ref{eq6C}) that, there is $1\leqslant
r=r(k)\leqslant m-1$ such that such that
\begin{equation}\label{eq7F}
h(f(\gamma_k(t_{r(k)})), f_k(\gamma_k(t_{r(k)+1})))\geqslant
\varepsilon_0/(2(l+1))\geqslant \varepsilon_0/(2m)\,.
\end{equation}
Observe that, the set $G_k:=|\gamma_k|_{(t_{r(k)}, t_{r(k)+1})}|$
belongs to $D^{\,\prime}\setminus E.$

\medskip
Observe that, $G_k$ contains some a continuum $\widetilde{G}_k$ with
$h(\widetilde{G}_k)\geqslant \varepsilon_0/4m$ for any $k\in {\Bbb
N}.$ Indeed, by the construction of $G_k,$ there is a sequence of
points $a_{s, k}:=f_k(\gamma_k(p_s))\rightarrow
w_k:=f(\gamma_k(t_{r(k)}))$ as $s\rightarrow \infty$ and $b_{s,
k}=f_k(\gamma_k(q_s))\rightarrow f(\gamma_k(t_{r(k)+1}))$ as
$s\rightarrow \infty,$ where $t_{r(k)}< p_s<q_s<t_{r(k)+1}$ and
$a_{s, k}, b_{s, k}\in G_k.$
By the triangle inequality and by~(\ref{eq7F})
$$
h(a_{s, k}, b_{s, k})\geqslant h(b_{s, k}, w_k)-h(w_k, a_{s,
k})\geqslant
$$
\begin{equation*}\label{eq11A}
h(f_k(\gamma_k(t_{r(k)+1})), w_k)-h(f_k(\gamma_k(t_{r(k)+1})), b_{s,
k})-h(w_k, a_{s, k})\geqslant
\end{equation*}
$$\varepsilon_0/(2m)-h(f_k(\gamma_k(t_{r(k)+1}), b_{s,
k}))-h(w_k, a_{s, k})\,.$$
Since $a_{s, k}:=f_k(\gamma_k(p_s))\rightarrow
w_k:=f_k(\gamma_k(t_{r(k)}))$ as $s\rightarrow \infty$ and $b_{s,
k}=f_k(\gamma_k(q_s))\rightarrow f_k(\gamma_k(t_{r(k)+1}))$ as
$s\rightarrow \infty,$ it follows from the last inequality that
there is $s=s(k)\in {\Bbb N}$ such that
\begin{equation*}\label{eq12B}
h(a_{s(k), k}, b_{s(k), k})\geqslant\varepsilon_0/(4m)\,.
\end{equation*}
Now, we set
$$\widetilde{G}_k:=f_k(\gamma_k)|_{[p_{s(k)}, q_{s(k)}]}\,.$$
In other words, $\widetilde{G}_k$ is a part of the path
$f_k(\gamma_k)$ between points $a_{s(k), k}$ and $b_{s(k), k}.$
Since $\widetilde{G}_k$ is a continuum in
$D^{\,\prime}_{f_k}\setminus E_*,$ there is a component $K_k$ of
$D^{\,\prime}_{f_k}\setminus E_*,$ containing $K_k.$ Let us apply
the definition of equi-uniformity for the sets $\widetilde{G}_k$ and
$K_{f_k}$ in $K_k$ (here $K_{f_k}$ is a continuum from the
definition of the class $\frak{R}^{E_*, E, F}_{M, \Phi, \delta,
p}(D)$, in particular, $h(K_{f_k})\geqslant \delta$). Due to this
definition, for the number $\delta_*:=\min\{\delta,
\varepsilon/4m\}>0$ there is $P>0$ such that
\begin{equation}\label{eq1F}
M_p(\Gamma(\widetilde{G}_k, K_{f_k}, K_k))\geqslant P
>0\,, \qquad k=1,2,\ldots\,.
\end{equation}
Let us to show that, the relation~(\ref{eq1F}) contradicts with the
definition of $f$ in~(\ref{eq2*!A}). Indeed, let us denote by
$\Gamma_k$ the family of all half-open paths $\beta_k:[a,
b)\rightarrow \overline{{\Bbb R}^n}$ such that $\beta_k(a)\in
\widetilde{G}_k,$ $\beta_k(t)\in K_k$ for all $t\in [a, b)$ and,
moreover, $\lim\limits_{t\rightarrow b-0}\beta_k(t):=B_k\in
K_{f_k}.$ Obviously, by~(\ref{eq1F})
\begin{equation}\label{eq4C}
M_p(\Gamma_k)=M_p(\Gamma(\widetilde{G}_k, K_{f_k}, K_k))\geqslant P
>0\,, \qquad k=1,2,\ldots\,.
\end{equation}
Consider the family $\Gamma_k^{\,\prime}$ of all maximal
$f_k$-liftings $\alpha_k:[a, c)\rightarrow D$ of the family
$\Gamma_k$ starting at $|\gamma_k|;$ such a family exists by
Proposition~\ref{pr3}.

Observe that, the situation when $\alpha_k\rightarrow \partial D$ as
$k\rightarrow\infty$ is impossible. Suppose the opposite: let
$\alpha_k(t)\rightarrow \partial D$ as $t\rightarrow c.$ Let us
choose an arbitrary sequence $\varphi_m\in [0, c)$ such that
$\varphi_m\rightarrow c-0$ as $m\rightarrow\infty.$ Since the space
$\overline{{\Bbb R}^n}$ is compact, the boundary $\partial D$ is
also compact as a closed subset of the compact space. Then there
exists $w_m\in
\partial D$ such that
\begin{equation}\label{eq7G}
h(\alpha_k(\varphi_m), \partial D)=h(\alpha_k(\varphi_m), w_m)
\rightarrow 0\,,\qquad m\rightarrow \infty\,.
\end{equation}
Due to the compactness of $\partial D$, we may assume that
$w_m\rightarrow w_0\in \partial D$ as $m\rightarrow\infty.$
Therefore, by the relation~(\ref{eq7G}) and by the triangle
inequality
\begin{equation}\label{eq8C}
h(\alpha_k(\varphi_m), w_0)\leqslant h(\alpha_k(\varphi_m),
w_m)+h(w_m, w_0)\rightarrow 0\,,\qquad m\rightarrow \infty\,.
\end{equation}
On the other hand,
\begin{equation}\label{eq9C}
f_k(\alpha_k(\varphi_m))=\beta_k(\varphi_m)\rightarrow \beta(c)
\,,\quad m\rightarrow\infty\,,
\end{equation}
because by the construction the path $\beta_k(t),$ $t\in [a, b],$
lies in $K_k\subset D^{\,\prime}_{f_k}\setminus E_*$ together with
its finite ones points. At the same time, by~(\ref{eq8C})
and~(\ref{eq9C}) we have that $\beta_k(c)\in C(f_k,
\partial D)\subset E_*$ by the definition of the class
$\frak{R}^{E_*, E, F}_{M, \Phi, \delta, p}(D).$ The inclusions
$\beta_k\subset D^{\,\prime}\setminus E_*$ and $\beta_k(c)\in E_*$
contradict each other.

\medskip
Therefore, by Proposition~\ref{pr3} $\alpha_k\rightarrow x_1\in D$
as $t\rightarrow c-0,$ and $c_b$ and $f_k(\alpha_k(b))=f_k(x_1).$ In
other words, the $f_k$-lifting $\alpha_k$ is complete, i.e.,
$\alpha_k:[a, b]\rightarrow D.$ Besides that, it follows from that
$\alpha_k(b)\in f_k^{\,-1}(K_{f_k}).$

\medskip
Again, by the definition of the class $\frak{R}^{E_*, E, F}_{M,
\Phi, \delta, p}(D),$
\begin{equation}\label{eq13B}
h(f_k^{\,-1}(K_{f_k}), \partial D)\geqslant \delta>0\,.
\end{equation}
Since $x_0\ne\infty,$ it follows from~(\ref{eq13B}) that
\begin{equation}\label{eq14C}
f_k^{\,-1}(K_{f_k})\subset D\setminus B(x_0, r_0)
\end{equation}
for any $k\in {\Bbb N}$ and some $r_0>0.$ Let $k$ be such that
$2^{\,-k}<\varepsilon_0.$ We may consider that $\varepsilon_0<r_0.$
Due to~(\ref{eq14C}), we may show that
\begin{equation}\label{eq15A}
\Gamma_k^{\,\prime}>\Gamma(S(x_0, 2^{\,-k}), S(x_0, \varepsilon_0),
D)
\end{equation}
(see \cite[Theorem~1.I.5, \S46]{Ku}).
Now, we set
\begin{equation}\label{eq9} I_k=I(x_0, 2^{\,-k},
\varepsilon_0)=\int\limits_{2^{\,-k}}^{\varepsilon_0}\
\frac{dr}{r^{\frac{n-1}{p-1}}\widetilde{q}_{k,
x_0}^{\frac{1}{p-1}}(r)}\,,
\end{equation}
where
$$\widetilde{q}_{k, x_0}(r)=\frac{1}{\omega_{n-1}r^{n-1}} \int\limits_{S(x_0,
r)}\widetilde{Q}_{f_{m_k}}(x)\,d\mathcal{H}^{n-1}\,$$ and
$$\widetilde{Q}_{f_{m_k}}(x)=\begin{cases}
Q_{f_{m_k}}(x), & Q(x)\geqslant 1\\ 1, & Q(x)<1\end{cases}\,.$$

By Lemma~\ref{lem1} and Remark~\ref{rem1}
\begin{equation}\label{eq2_1}
I_k=\int\limits_{2^{\,-k}}^{\varepsilon_0}\
\frac{dr}{r^{\frac{n-1}{p-1}}\widetilde{q}_{k,
x_0}^{\frac{1}{p-1}}(r)}\rightarrow\infty
\end{equation}
as $k\rightarrow\infty.$
Since $\widetilde{q}_{k, x_0}(r)\geqslant 1$ for a.e. $r$ and
by~(\ref{eq2_1}), $0<I_k<\infty$ for sufficiently large $k\in {\Bbb
N}.$ Set
$$\psi_k(t)= \left \{\begin{array}{rr}
1/[t^{\frac{n-1}{p-1}}\widetilde{q}_{k, x_0}^{\frac{1}{p-1}}(t)], &
t\in (2^{\,-k}, \varepsilon_0)\ ,
\\ 0,  &  t\notin (2^{\,-k}, \varepsilon_0)\ .
\end{array} \right. $$
Let $\eta_k(t)=\psi_k(t)/I_k$ for $t\in (2^{\,-k}, \varepsilon_0)$
and $\eta_k(t)=0$ otherwise. Now, $\eta_k$ satisfies~(\ref{eqA2})
for $r_1=2^{\,-k}$ and $r_2=\varepsilon_0.$ Now, by~(\ref{eq2*!A}),
(\ref{eq15A}), (\ref{eq2_1}) and by Fubini's theorem,
\begin{equation}\label{eq3_1}
M_p(f_k(\Gamma^{\,\prime}_k)\leqslant\frac{1}{I^{p}_k}\int\limits_{A(x_0,
2^{\,-k}, \varepsilon_0)}
\widetilde{Q}_{f_{m_k}}(x)\cdot\psi_k^p(|x-x_0|)\,dm(x)=\frac{\omega_{n-1}}{I^{p-1}_k}\rightarrow
0
\end{equation}
as $k\rightarrow\infty.$ Thus, by~(\ref{eq3_1}) and
(\ref{eq15A})--(\ref{eq9}) we obtain that
\begin{equation}\label{eq3A}
M_p(\Gamma_k)= M_p(f_k(\Gamma_k^{\,\prime}))\leqslant
\Delta(k)\rightarrow 0
\end{equation}
as $k\rightarrow \infty.$ However, the relation~(\ref{eq3A})
together with the inequality~(\ref{eq4C}) contradict each other,
which proves~(\ref{eq1}).

\medskip
Observe that, the possibility of a continuous extension of any $f\in
\frak{R}^{E_*, E, F}_{M, \Phi, \delta, p}(D)$ at $x_0,$ $x_0\ne
\infty,$ follows from~(\ref{eq1}), as well. Indeed, otherwise we
have $h(f(x_k), f(y_k))\geqslant \varepsilon_0$ for some sequences
$x_k, y_k\rightarrow x_0$ as $k\rightarrow\infty,$ but this
contradicts with~(\ref{eq1}).

\medskip
Now, we prove that $\frak{R}^{E_*, E, F}_{M, \Phi, \delta,
p}(\overline{D})$ is equicontinuous at $x_0$ with a respect to
$\overline{D}.$ Suppose the opposite. Then there is $x_0\in\partial
D,$ $\varepsilon_0>0,$ a sequence $x_k\rightarrow x_0,$
$x_k\in\overline{D},$ and $f_k\in \frak{R}^{E_*, E, F}_{M, \Phi,
\delta, p}(\overline{D})$ such that
\begin{equation}\label{eq12C}
h(f_k(x_k), f_k(x_0))\geqslant \varepsilon_0\,.
\end{equation}
Since $f_k$ has a continuous extension to $\partial D,$ we may
consider that $x_k\in D.$ In addition, it follows from~(\ref{eq12C})
that we may find a sequence $x^{\,\prime}_k\in D,$ $k=1,2,\ldots ,$
such that $x^{\,\prime}_k\rightarrow x_0$ and
\begin{equation}\label{eq13C}
h(f_k(x_k), f_k(x^{\,\prime}_k))\geqslant \varepsilon_0/2\,.
\end{equation}
The relation~(\ref{eq13C}) contradicts with~(\ref{eq1}). The
contradiction obtained above completes the proof for $x_0\ne\infty.$

\medskip
Finally, if $x_0=\infty,$ we set $\widetilde{f}_k:=\psi\circ f_k,$
$k=1,2,\ldots ,$ where $\psi(y)=\frac{y}{|y|^2}.$ By the assumption
of Theorem~\ref{th1}, the mappings $\widetilde{f}_k$ satisfy the
relations~(\ref{eq2*!A})--(\ref{eqA2}) at the origin with a new
function $\widetilde{Q}(x):=Q_{f_k}\left(\frac{x}{|x|}\right).$
The conformal change of the variables $x=\frac{y}{|y|^2}$
corresponds to the jacobian $J(x, y)=\frac{1}{|y|^{2n}}.$ So,
observe that,
\begin{equation}\label{eq3A_1} \int\limits_{{\Bbb
R}^n}\Phi\left(Q_{f_k}\left(\frac{x}{|x|^2}\right)\right)\cdot\frac{dm(y)}{(1+|x|^2)^n}=
\int\limits_{{\Bbb
R}^n}\Phi(Q_{f_k}(z))\cdot\frac{dm(z)}{(1+|z|^2)^n}<\infty\,.
\end{equation}
The relation~(\ref{eq3A_1}) implies the possibility of applying
above technique to $Q_{f_k}\left(\frac{x}{|x|^2}\right)$ instead of
$Q_{f_k}(x).$ Repeating further all the arguments given from
relation~(\ref{eq13A}) and further for the family of mappings
$\widetilde{f}_k$ instead of $f_k$, we arrive at the contradiction
with~(\ref{eq13A}), which completes the proof of Theorem~\ref{th1}.
$\Box$

\section{Sobolev and Orlicz-Sobolev classes}

Let $\omega$ be an open set in $\overline{{\Bbb R}^k}:={\Bbb
R}^k\cup\{\infty\}$, $k=1,\ldots\,,n-1$. A (continuous) mapping
$S:\omega\rightarrow{\Bbb R}^n$ is called a $k$-dimensional surface
$S$ in ${\Bbb R}^n$. Sometimes we call the image
$S(\omega)\subseteq{\Bbb R}^n$ the surface $S$, too. The number of
preimages
\begin{equation} \label{eq8.2.3}N(S,y)\ =\
\mathrm{card}\,\,S^{-1}(y)=\mathrm{card}\:\{x\in\omega:\
S(x)=y\}\end{equation}
is said to be a {\it multiplicity function} of the surface $S$ at a
point $y\in{\Bbb R}^n$. In other words, $N(S,y)$ denotes the
multiplicity of covering of the point $y$ by the surface $S$. It is
known that the multiplicity function is lower semi continuous, i.e.,
$$N(S,y) \geqslant \liminf_{m\rightarrow\infty}: N(S,y_m)$$
for every sequence $y_m\in{\Bbb R}^n$, $m=1,2,\ldots\,$, such that
$y_m\rightarrow y\in{\Bbb R}^n$ as $m\rightarrow\infty$, see, e.g.,
\cite{RR}, p. 160. Thus, the function $N(S,y)$ is Borel measurable
and hence measurable with respect to every Hausdorff measure ${\cal
H}^k$ (see, e.g., \cite{Sa}, p.~52).

\medskip
If $\rho:{\Bbb R}^n\rightarrow[0,\infty]$ is a Borel function, then
its {\it integral over a $k$-dimensional surface} $S$ in ${\Bbb
R}^n$, $n\geqslant 2,$ is defined by the equality
\begin{equation}\label{eq8.2.5} \int\limits_S \rho\ d{\cal {A}}\
:=\ \int\limits_{{\Bbb R}^n}\rho(y)\:N(S,y)\ d{\cal H}^ky\
.\end{equation} Given a family ${\cal S}$ of such $k$-dimensional
surfaces $S$ in ${\Bbb R}^n$, a Borel function $\ \rho:{\Bbb
R}^n\rightarrow[0,\infty]$ is called {\it admissible} for $\cal S$,
abbr. $\rho\in\mathrm{adm}\,\cal S$, if
\begin{equation}\label{eq8.2.6A}
\int\limits_S\rho^k\ d{\cal{A}}\ \geqslant\ 1\end{equation} for
every $S\in\cal S$. Given $p\in[k,\infty)$, the {\it $p$-modulus} of
$\cal S$ is the quantity
\begin{equation}\label{M} M_p({\cal S})\ =\
\inf_{\rho\in\mathrm{adm}\,\cal S}\int\limits_{{\Bbb R}^n}\rho^p(x)\
dm(x)\,.\end{equation}
The next class of mappings is a generalization of quasiconformal
mappings in the sense of Gehring's ring definition (see \cite{Ge};
cf. \cite[Chapter~9]{MRSY$_2$}). Let $D$ and $D^{\,\prime}$ be
domains in ${\Bbb R}^n,$ $n\geqslant 2$. Suppose that
$x_0\in\overline {D}\setminus\{\infty\}$ and $Q\colon
D\rightarrow(0,\infty)$ is a Lebesgue measurable function. A mapping
$f:D\rightarrow D^{\,\prime}$ is called a {\it lower $Q$-mapping at
a point $x_0$ relative to $p$-modulus} if
\begin{equation}\label{eq1A}
M_p(f(\Sigma_{\varepsilon}))\geqslant \inf\limits_{\rho\in{\rm
ext}_p\,{\rm adm}\Sigma_{\varepsilon}}\int\limits_{D\cap A(x_0,
\varepsilon, r_0)}\frac{\rho^p(x)}{Q(x)}\,dm(x)
\end{equation}
for every spherical ring $A(x_0, \varepsilon, r_0)=\{x\in {\Bbb
R}^n\,:\, \varepsilon<|x-x_0|<r_0\}$, $r_0\in(0,d_0)$,
$d_0=\sup\limits_{x\in D}|x-x_0|$, where $\Sigma_{\varepsilon}$ is
the family of all intersections of the spheres $S(x_0, r)$ with the
domain $D$, $r\in (\varepsilon, r_0)$. If $p=n$, we say that $f$ is
a lower $Q$-mapping at $x_0$. We say that $f$ is a lower $Q$-mapping
relative to $p$-modulus in $A\subset {\Bbb R}^n$ if (\ref{eq1A}) is
true for all $x_0\in A$.

Following \cite[Section~II.10]{Ri}, a {\it condenser} is a pair
$E=(A, C)$ where $A\subset {\Bbb R}^n$ is open and $C$ is non--empty
compact set contained in $A.$ Given $p\geqslant 1$ and a condenser
$E=\left(A,\,C\right),$ we set
%
$${\rm cap}_p\,E\quad=\quad{\rm
cap}_p\,\left(A,\,C\right)\quad=\quad\inf \limits_{u\in
W_0\left(E\right) }\quad\int\limits_{A}\,|\nabla u|^p dm(x)$$
%
%
where $W_0(E)=W_0(A,\,C)$ is a family of all non-negative functions
$u:A\rightarrow {\Bbb R}^1$ such that (1)\quad $u\in
C_0(A),$\quad(2)\quad $u(x)\ge 1$ for $x\in C,$ and (3)\quad $u$ is
$ACL.$ In the above formula
$|\nabla u|={\left(\sum\limits_{i=1}^n\,{\left(\partial_i
u\right)}^2 \right)}^{1/2},$ and ${\rm cap}_p\,E$ is called {\it
$p$-ca\-pa\-ci\-ty} of the condenser $E,$ see in \cite[Section
~II.10]{Ri}.

\medskip
The following lemma was proved in \cite[Lemma~4.4]{Sev$_3$},
cf.~\cite[Lemma~4.2]{SevSkv$_2$}.

\medskip
\begin{lemma}\label{lem1A} { Let $x_0\in D,$ let $n\geqslant 2,$ let $p>n-1$ and let
$Q:D\rightarrow [0,\infty]$ be locally integrable function with
degree $\alpha:=\frac{n-1}{p-n+1}.$ If $f:D\rightarrow
\overline{{\Bbb R}^n}$ is an open discrete lower $Q$-mapping at
$x_0$ with respect to $p$-modulus, then $f$ satisfies
$${\rm cap}_{\beta}(f(E))\leqslant \frac{\omega_{n-1}}{I^{\,*\,\beta-1}}$$
at the point $x_0$ with $\beta:=\frac{p}{p-n+1}$ and
$Q^{\,*}=Q^{\frac{n-1}{p-n+1}}.$

Here $E$ is the condenser $(B(x_0, r_2), \overline{B(x_0, r_1)}),$
$q^{\,*}_{x_0}(r)$ denotes the spherical mean of
$Q^{\frac{n-1}{p-n+1}}$ over $|x-x_0|=r$ and $I^{\,*}=I^{\,*}(r_1,
r_2)=\int\limits_{r_1}^{r_2}\frac{dt}{t^{\frac{n-1}{\beta-1}}
q_{x_0}^{*\,\frac{1}{\beta-1}}(t)}$ for $0<r_1<r_2<{\rm dist\,}(x_0,
\partial D).$
}\end{lemma}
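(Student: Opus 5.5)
The plan is to prove a capacity estimate for open discrete lower $Q$-mappings by passing from the surface-modulus inequality~(\ref{eq1A}) to a lower bound for the $p$-modulus of the image family of spheres. That bound is then converted, by the Ziemer/Hesse-type duality between surface modulus and path modulus (equivalently capacity), into an upper bound for ${\rm cap}_{\beta}$ of the image condenser. Write $\Sigma$ for the family of spheres $S(x_0,r)$, $r\in(r_1,r_2)$, all lying in $D$, since $r_2<{\rm dist}(x_0,\partial D)$.

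\emph{Step 1: lower bound for the infimum in~(\ref{eq1A}).} First I compute a lower bound for $\inf_{\rho}\int_{A}\rho^p/Q\,dm$ over $\rho\in{\rm ext}_p\,{\rm adm}\,\Sigma$, that is, over $\rho$ with $\int_{S(x_0,r)}\rho^{n-1}\,d\mathcal H^{n-1}\geqslant 1$ for a.e.\ $r$.
\begin{itemize}
\item By H\"older on each sphere with exponents $\frac{p}{n-1}$ and $\frac{p}{p-n+1}$,
$$1\leqslant \int_{S_r}\rho^{n-1}\leqslant\Bigl(\int_{S_r}\frac{\rho^p}{Q}\Bigr)^{\frac{n-1}{p}}\Bigl(\int_{S_r}Q^{\frac{n-1}{p-n+1}}\Bigr)^{\frac{p-n+1}{p}}.$$
This gives $\int_{S_r}\rho^p/Q\geqslant (\omega_{n-1}r^{n-1}q^*_{x_0}(r))^{-\frac{p-n+1}{n-1}}$, and local integrability of $Q^{\alpha}$ makes $q^*_{x_0}$ finite a.e.
\item Integrating in $r$, with $\frac{p-n+1}{n-1}=\frac{1}{\beta-1}$ (since $\beta-1=\frac{n-1}{p-n+1}$), yields $\int_A\rho^p/Q\geqslant c\,\widetilde I$, where $\widetilde I=\int_{r_1}^{r_2}(r^{n-1}q^*)^{-1/(\beta-1)}dr$ and $c$ is a power of $\omega_{n-1}$.
\item Since $\frac{n-1}{\beta-1}=p-n+1$, this agrees with $I^*$ up to the power of $r$ appearing. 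I will check this exponent bookkeeping carefully against the statement's normalization; if the statement's normalization is the intended one, the constants come out as $\omega_{n-1}$.
\end{itemize}
Hence $M_p(f(\Sigma))\geqslant c\,I^*$.

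\emph{Step 2: passing to capacity.} The key tool here is the duality result: for a condenser $(A,C)$ and the family $\Sigma_f$ of surfaces separating $C$ from $\partial A$,
$${\rm cap}_q(A,C)=M_{q'}(\Sigma_f)^{-\frac{1}{q'-1}}\cdot \text{const}\ (\text{appropriately}),$$
with $q'=q/(q-1)$ adjusted to the $(n-1)$-dimensional surface modulus. I would use the Ziemer--Shlyk form: ${\rm cap}_q(A,C)^{1/q}\,M_{p}(\Sigma)^{1/p}=1$ for $p=\frac{q(n-1)}{... }$, checking that the relevant exponents correspond to $q=\beta$.
\begin{itemize}
\item Since $f$ is open and discrete, $f(E)=(f(B(x_0,r_2)),f(\overline{B(x_0,r_1)}))$ is a condenser.
\item Each $f(S(x_0,r))$ separates $f(\overline{B(x_0,r_1)})$ from the boundary of $f(B(x_0,r_2))$. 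By openness of $f$, the boundary of $f(B_r)$ lies in $f(S_r)$.
\item Therefore the family separating the image condenser is minorized by $f(\Sigma)$, so its modulus is at least $M_p(f(\Sigma))$.
\item Inverting the duality gives ${\rm cap}_{\beta}(f(E))\leqslant (c I^*)^{-(\beta-1)}$, and the constant should match $\omega_{n-1}/I^{*\,\beta-1}$.
\end{itemize}

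The main obstacle is Step 2: justifying the surface-modulus/capacity duality for the image condenser, whose plates are not smooth. One also has to verify that $f(S_r)$, with multiplicity counted through $N(S,y)$, genuinely lies in the separating family. Here I would invoke the known Ziemer/Shlyk duality in the generality of arbitrary condensers, together with the boundary inclusion $\partial f(B_r)\subset f(S_r)$ for open maps. Handling $\infty\in f(D)$ by a M\"obius normalization is routine.
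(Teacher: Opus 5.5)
Your proposal is correct and is essentially the standard argument behind this lemma; the paper itself gives no proof and only quotes it from Sevost'yanov's earlier work. H\"older on each sphere gives $M_p(f(\Sigma))\geqslant \omega_{n-1}^{-1/(\beta-1)}I^{\,*}$, and your $\widetilde I$ is exactly $I^{\,*}$, so the exponent check you postponed closes with no adjustment. Since $\partial f(B(x_0,r))\subset f(S(x_0,r))$ and $N\geqslant 1$ there, the nontrivial half of the Ziemer--Shlyk duality, which in this paper's normalization of surface modulus reads ${\rm cap}_{\beta}(A,C)\leqslant M_p(\Sigma_{(A,C)})^{1-\beta}$ for the family $\Sigma_{(A,C)}$ of surfaces separating $C$ from $\partial A$, then yields exactly $\omega_{n-1}/I^{\,*\,\beta-1}$.

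Discard your closing M\"obius remark: for $\beta\ne n$ neither ${\rm cap}_{\beta}$ nor the $p$-modulus of surfaces is M\"obius invariant. Instead simply assume $f(B(x_0,r_2))\subset{\Bbb R}^n$, which is needed anyway for $f(E)$ to be a condenser in the sense used here.
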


\medskip In what follows we will need the following auxiliary
assertion (see, for example, \cite[Lemma~7.4, ch.~7]{MRSY$_2$} for
$p=n$ and \cite[Lemma~2.2]{Sal} for $p\ne n.$

\medskip
\begin{proposition}\label{pr1A}
{\, Let $x_0 \in {\Bbb R}^n,$ $Q(x)$ be a Lebesgue measurable
function, $Q:{\Bbb R}^n\rightarrow [0, \infty],$ $Q\in
L_{loc}^1({\Bbb R}^n).$ We set $A:=A(x_0, r_1, r_2)=\{ x\,\in\,{\Bbb
R}^n : r_1<|x-x_0|<r_2\}$ and
$\eta_0(r)=\frac{1}{Ir^{\frac{n-1}{p-1}}q_{x_0}^{\frac{1}{p-1}}(r)},$
where $I:=I=I(x_0,r_1,r_2)=\int\limits_{r_1}^{r_2}\
\frac{dr}{r^{\frac{n-1}{p-1}}q_{x_0}^{\frac{1}{p-1}}(r)}$ and
$q_{x_0}(r):=\frac{1}{\omega_{n-1}r^{n-1}}\int\limits_{|x-x_0|=r}Q(x)\,d{\mathcal
H}^{n-1}$ is the integral average of the function $Q$ over the
sphere $S(x_0, r).$ Then
\begin{equation*}\label{eq10A_1}
\frac{\omega_{n-1}}{I^{p-1}}=\int\limits_{A} Q(x)\cdot
\eta_0^p(|x-x_0|)\ dm(x)\leqslant\int\limits_{A} Q(x)\cdot
\eta^p(|x-x_0|)\ dm(x)
\end{equation*}
for any Lebesgue measurable function $\eta :(r_1,r_2)\rightarrow
[0,\infty]$ such that
$\int\limits_{r_1}^{r_2}\eta(r)\,dr=1. $ Moreover,~(\ref{eq10A_1})
holds for similar functions $\eta$ with
$\int\limits_{r_1}^{r_2}\eta(r)\,dr\geqslant 1$ (see, e.g.,
\cite[Remark~3.1]{Sev$_3$}).}
\end{proposition}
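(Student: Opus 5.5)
Proposition~\ref{pr1A} is a weighted extremal problem on the radial line. I will reduce the volume integrals to one-dimensional integrals over $(r_1,r_2)$ and then apply H\"older's inequality.

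\emph{Reduction to one variable.} Write the integral in spherical coordinates centred at $x_0$. By Fubini's theorem and the definition of $q_{x_0}$, for any measurable $\eta\geqslant 0$,
$$\int\limits_{A}Q(x)\,\eta^p(|x-x_0|)\,dm(x)=\int\limits_{r_1}^{r_2}\eta^p(r)\int\limits_{S(x_0,r)}Q\,d\mathcal H^{n-1}\,dr=\omega_{n-1}\int\limits_{r_1}^{r_2}\eta^p(r)\,r^{n-1}q_{x_0}(r)\,dr\,.$$
Here $Q\in L^1_{loc}$, so $q_{x_0}(r)<\infty$ for a.e. $r$. Set $w(r):=r^{n-1}q_{x_0}(r)$. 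Then $\eta_0=\frac{1}{I\,w^{1/(p-1)}}$. Substituting $\eta_0$, the integrand becomes $\eta_0^p w=I^{-p}w^{-1/(p-1)}$, and integrating gives $\omega_{n-1}I^{-p}\cdot I=\omega_{n-1}/I^{p-1}$. This proves the equality in the statement.

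\emph{The inequality.} Let $\eta$ satisfy $\int_{r_1}^{r_2}\eta\,dr\geqslant 1$. Assume first that $p>1$ and $0<I<\infty$, which is the case of interest. Split the integrand as $\eta=(\eta\,w^{1/p})\cdot w^{-1/p}$ and apply H\"older's inequality with exponents $p$ and $p/(p-1)$:
$$1\leqslant\int\limits_{r_1}^{r_2}\eta\,dr\leqslant\Big(\int\limits_{r_1}^{r_2}\eta^p w\,dr\Big)^{1/p}\Big(\int\limits_{r_1}^{r_2}w^{-1/(p-1)}\,dr\Big)^{(p-1)/p}=\Big(\int\limits_{r_1}^{r_2}\eta^p w\,dr\Big)^{1/p}I^{(p-1)/p}\,.$$
Raising to the power $p$ gives $\int\eta^p w\,dr\geqslant I^{-(p-1)}$. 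Multiplying by $\omega_{n-1}$ and using the reduction step yields the claimed bound. Since only $\int\eta\geqslant1$ was used, the case $\int\eta\geqslant 1$ from the final sentence is covered at once, and $\int\eta=1$ is a special case.

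\emph{Degenerate cases.} The main care is needed on the sets where $w=0$ or $w=\infty$, and where $I=\infty$ or $I=0$.
\begin{itemize}
\item On $\{w=\infty\}$, which is null, nothing changes.
\item On $\{w=0\}$, i.e.\ where $q_{x_0}=0$: if this set has positive measure, then $I=\infty$. In that case the left side is $0$ under the convention $1/\infty=0$, so the inequality is trivial.
\item If $I=0$, which cannot happen when $r_2>r_1$ and $w<\infty$ a.e., the statement is vacuous.
\item If $I=\infty$ for any reason, the left side is $0$ and the inequality is trivial.
\end{itemize}
For $p=1$ the exponents degenerate. There one reads $\eta_0$ in the limiting sense, and the inequality follows directly from $\int\eta\,w\,dr\geqslant \operatorname{ess\,inf} w\cdot\int\eta\,dr$.

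I expect no serious obstacle. The only delicate point is this bookkeeping with the conventions $0\cdot\infty$ and $1/\infty$ in the H\"older step.
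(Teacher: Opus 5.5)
Your proof is correct and follows the standard route. The paper gives no proof of its own but cites Lemma~7.4 of Martio--Ryazanov--Srebro--Yakubov ($p=n$) and Lemma~2.2 of Salimov ($p\ne n$), which use this same Fubini reduction to $\omega_{n-1}\int_{r_1}^{r_2}\eta^p(r)\,r^{n-1}q_{x_0}(r)\,dr$ followed by H\"older's (equivalently Jensen's) inequality, and your remark that only $\int_{r_1}^{r_2}\eta\,dr\geqslant 1$ is used is exactly the extension the paper attributes to Remark~3.1 of Sevost'yanov; your $p=1$ paragraph is unnecessary, since the statement tacitly assumes $p>1$ (otherwise $\eta_0$ and $I$ are undefined).
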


\medskip
Combining Lemma~\ref{lem1A} with Proposition~\ref{pr1A}, we obtain
the following statement.

\medskip
\begin{lemma}\label{lem5} { Let $x_0\in D,$ let $n\geqslant 2,$ let $p>n-1$ and let
$Q:D\rightarrow [0,\infty]$ be locally integrable function with
degree $\alpha:=\frac{n-1}{p-n+1}.$ If $f:D\rightarrow
\overline{{\Bbb R}^n}$ is an open discrete lower $Q$-mapping at
$x_0$ with respect to $p$-modulus, $p>n-1,$ then $f$ satisfies
$${\rm cap}_{\beta}(f(E))\leqslant \int\limits_{A(x_0, r_1, r_2)} Q^{\frac{n-1}{p-n+1}}(x)\cdot
\eta^{\beta}(|x-x_0|)\ dm(x)$$
for any Lebesgue measurable function $\eta :(r_1,r_2)\rightarrow
[0,\infty]$ such that
$\int\limits_{r_1}^{r_2}\eta(r)\,dr\geqslant 1,$ where
$\beta:=\frac{p}{p-n+1}$ and $Q^{\,*}=Q^{\frac{n-1}{p-n+1}}.$ Here
$E$ is the condenser $(B(x_0, r_2), \overline{B(x_0, r_1)}),$
$0<r_1<r_2<{\rm dist\,}(x_0,
\partial D).$
}\end{lemma}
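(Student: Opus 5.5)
The plan is to combine Lemma~\ref{lem1A} with Proposition~\ref{pr1A}. Lemma~\ref{lem1A} gives the capacity bound with the optimal function $\eta_0$ built from $Q^{\,*}=Q^{\frac{n-1}{p-n+1}}$. Proposition~\ref{pr1A}, applied with exponent $\beta$ in place of $p$ and with $Q^{\,*}$ in place of $Q$, shows that the weighted integral with $\eta_0$ is minimal among all admissible $\eta$.

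Fix $0<r_1<r_2<{\rm dist}\,(x_0,\partial D)$ and let $E=(B(x_0,r_2),\overline{B(x_0,r_1)})$. Let $q^{\,*}_{x_0}$ be the spherical mean of $Q^{\,*}$, and set
$$I^{\,*}=\int\limits_{r_1}^{r_2}\frac{dt}{t^{\frac{n-1}{\beta-1}}q^{*\,\frac{1}{\beta-1}}_{x_0}(t)}\,.$$
By Lemma~\ref{lem1A}, ${\rm cap}_{\beta}(f(E))\leqslant \omega_{n-1}/I^{\,*\,\beta-1}$.

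The hypothesis that $Q$ is locally integrable with degree $\alpha=\frac{n-1}{p-n+1}$ means that $Q^{\,*}\in L^1_{loc}$. We may extend $Q^{\,*}$ by zero outside $D$; since $r_2<{\rm dist}\,(x_0,\partial D)$, the ring $A(x_0,r_1,r_2)$ lies in $D$, so this extension changes nothing there. Proposition~\ref{pr1A} with exponent $\beta$ then gives
$$\frac{\omega_{n-1}}{I^{\,*\,\beta-1}}\leqslant\int\limits_{A(x_0,r_1,r_2)}Q^{\,*}(x)\,\eta^{\beta}(|x-x_0|)\,dm(x)$$
for every measurable $\eta$ with $\int_{r_1}^{r_2}\eta\,dr\geqslant 1$. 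This uses the version of the proposition that allows $\geqslant 1$, per \cite[Remark~3.1]{Sev$_3$}. Note that $\beta>1$ because $p>n-1$, and the factor $t^{\frac{n-1}{\beta-1}}$ is exactly the one in the definition of $I$ in Proposition~\ref{pr1A} with $p$ replaced by $\beta$. Chaining the two inequalities proves the claim.

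The main obstacle is the degenerate values of $I^{\,*}$.
\begin{itemize}
\item If $I^{\,*}=\infty$, then the bound from Lemma~\ref{lem1A} is $0$ and the claim is trivial.
\item If $I^{\,*}=0$, then $q^{\,*}_{x_0}=\infty$ on a set of positive measure of radii. By Fubini's theorem, every admissible $\eta$ is positive on a set of positive measure, so the right-hand side is $\infty$ and the inequality again holds.
\item If $0<I^{\,*}<\infty$, Proposition~\ref{pr1A} applies directly.
\end{itemize}
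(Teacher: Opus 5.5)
Your proposal is correct and follows the paper's own route: the paper obtains this lemma simply by chaining the bound ${\rm cap}_{\beta}(f(E))\leqslant\omega_{n-1}/I^{\,*\,\beta-1}$ of Lemma~\ref{lem1A} with Proposition~\ref{pr1A}, applied to $Q^{\,*}$ with the exponent $\beta$ in place of $p$. (To meet the hypothesis $Q\in L^1_{loc}({\Bbb R}^n)$ literally, extend $Q^{\,*}$ by zero outside the ring rather than outside $D$.)

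One minor point concerns your case $I^{\,*}=0$, which is in fact vacuous. Since $Q^{\,*}$ is integrable over the compact set $\overline{A(x_0,r_1,r_2)}\subset D$, Fubini gives $q^{\,*}_{x_0}<\infty$ a.e. Moreover, $I^{\,*}=0$ would force $q^{\,*}_{x_0}=\infty$ a.e., not merely on a set of positive measure, and the a.e. version is what your argument in that case actually needs.
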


\medskip
The following definition is from \cite[Section~2, Ch.~III]{Ri}. Let
$F$ be a compact set in ${\Bbb R}^n$. We say that $F$ is of {\it
$p$-capacity zero} if ${\rm cap}_p\,(A, F)=0$ for some bounded open
set $A\supset F.$ An arbitrary set $E\subset {\Bbb R}^n$ is of
$p$-capacity zero if the same is true for every compact subset of
$E.$ In this case we write ${\rm cap}_p\, E = 0$ (${\rm cap}\, E=0$
if $p=n$), otherwise ${\rm cap}_p E>0.$ The following statement was
proved in~\cite[Lemma~4]{Sev$_2$} for $p=n$.

\medskip
\begin{lemma}\label{lem6}
{\, Let $n-1<p\leqslant n,$ let $D$ be a domain in ${\Bbb R}^n,$
$n\ge 2,$ let $E\subset \overline{{\Bbb R}^n}$ be a compact set of
positive capacity for $p=n,$ and an arbitrary set for $p\ne n,$ let
$\Phi:\overline{{\Bbb R^{+}}}\rightarrow \overline{{\Bbb R^{+}}}$ be
an increasing convex function, let $M>0$ and let ${\frak F}^{p,
M}_{\Phi, E}(x_0)$ be a family of open discrete mappings
$f:D\,\rightarrow\,\overline{{\Bbb R}^n}\setminus E$ for which there
exists a function $Q=Q_f:D\rightarrow [0, \infty]$ be a Lebesgue
measurable function such that
$${\rm cap}_{p}(f({\mathcal E}))\leqslant \int\limits_{A(x_0, r_1, r_2)} Q(x)\cdot
\eta^p(|x-x_0|)\ dm(x)$$
for any Lebesgue measurable function $\eta :(r_1,r_2)\rightarrow
[0,\infty]$ such that
$\int\limits_{r_1}^{r_2}\eta(r)\,dr\geqslant 1,$ ${\mathcal
E}=(B(x_0, r_2), \overline{B(x_0, r_1)}),$ while
$$\int\limits_{D}\Phi(Q_f(x))\cdot\frac{dm(x)}{(1+|x|^2)^n}\leqslant
M\,.$$
If for some $\delta_0>\tau_0:=\Phi(0),$
$$\int\limits_{\delta_0}^{\infty}
\frac{d\tau}{\tau\left[\Phi^{-1}(\tau)\right]^{\frac{1}{p-1}}}=
\infty\,,$$
then the family of mappings ${\frak F}^{p, M}_{\Phi, E}(x_0)$ is
equicontinuous at the point~$x_0.$ }
\end{lemma}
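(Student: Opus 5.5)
This follows the scheme of \cite[Lemma~4]{Sev$_2$}, where the case $p=n$ is treated, replacing conformal capacity by $p$-capacity throughout. The argument is by contradiction.

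Suppose ${\frak F}^{p, M}_{\Phi, E}(x_0)$ is not equicontinuous at $x_0$. Then there exist $\varepsilon_0>0$, points $x_k\to x_0$ and mappings $f_k\in{\frak F}^{p, M}_{\Phi, E}(x_0)$ such that
$$h(f_k(x_k),f_k(x_0))\geqslant\varepsilon_0.$$

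\textbf{Step 1: the capacity tends to zero.} Fix $0<r_0<\min\{1,{\rm dist}(x_0,\partial D)\}$. By Lemma~\ref{lem1}, applied with Remark~\ref{rem1} to pass to $Q^{\,\prime}_{f_k}\geqslant 1$, we get
$$I_k:=\int\limits_{\varepsilon_k}^{r_0}\frac{dt}{t^{\frac{n-1}{p-1}}q^{\,\prime\,\frac{1}{p-1}}_{k,x_0}(t)}\to\infty\qquad\text{uniformly in }k,$$
whenever $\varepsilon_k:=|x_k-x_0|\to 0$. We have $0<I_k<\infty$ because $q^{\,\prime}\geqslant 1$. Take $\eta_k=\psi_k/I_k$ with $\psi_k(t)=t^{-\frac{n-1}{p-1}}q^{\,\prime\,-\frac1{p-1}}_{k,x_0}(t)$. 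This $\eta_k$ is admissible in the hypothesis. Since $Q_{f_k}\leqslant Q^{\,\prime}_{f_k}$, the hypothesis together with Proposition~\ref{pr1A} and Fubini's theorem gives
$${\rm cap}_p\bigl(f_k(B(x_0,r_0)),f_k(\overline{B(x_0,\varepsilon_k)})\bigr)\leqslant\frac{\omega_{n-1}}{I_k^{p-1}}\to 0.$$

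\textbf{Step 2: a lower bound contradicting Step 1.} Set $C_k:=f_k(\overline{B(x_0,\varepsilon_k)})$. This is a continuum containing $f_k(x_0)$ and $f_k(x_k)$, so $h(C_k)\geqslant\varepsilon_0$. The open set $A_k:=f_k(B(x_0,r_0))$ omits $E$, since $f_k$ takes values in $\overline{{\Bbb R}^n}\setminus E$.

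\emph{Case $p=n$.} A condenser whose plate has chordal diameter at least $\varepsilon_0$ and whose complement contains the fixed set $E$ of positive capacity has capacity bounded below by a constant $c(\varepsilon_0,E)>0$. This is the standard estimate used in \cite{Sev$_2$}, going back to Väisälä and Martio--Rickman--Väisälä; it is proved by reducing to Teichmüller/Loewner-type ring estimates after a Möbius normalisation.

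\emph{Case $n-1<p<n$.} Here $E$ is arbitrary, so a different lower bound is needed: ${\rm cap}_p(A,C)\geqslant c\,({\rm diam}\,C)^{n-p}$ for every continuum $C$ (see \cite{Ri}, or the Kruglikov/Maz'ya estimate). One must also handle the possibility that $C_k$ approaches $\infty$. Because $h$ is the chordal metric, I would first apply a chordal isometry (rotation of the sphere) so that $C_k$ stays in a fixed ball while keeping chordal diameter at least $\varepsilon_0$. Alternatively, I would pass to a subsequence with $f_k(x_0)\to y_0$ and treat $y_0=\infty$ via the inversion $\psi$. In every case the capacity is bounded below by a positive constant independent of $k$.

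Step 2 contradicts Step 1, so the family is equicontinuous at $x_0$.

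\textbf{Main obstacle.} The difficult part is the lower capacity bound in Step 2. It must be uniform in $k$ and work in the chordal metric when $f_k(D)$ may contain $\infty$. This is especially delicate for $p<n$, because $p$-capacity is not Möbius invariant. My first approach would be to cover the situation by finitely many normalisations (rotating the sphere so that $C_k$ lies in a bounded region), then use monotonicity of capacity in the open set together with the Euclidean diameter bound. If that does not work, I would instead reduce via $\psi$ to the bounded case. Step 1 is routine.
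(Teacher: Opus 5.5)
Your scheme matches the one the paper relies on; the paper's proof of Lemma~\ref{lem6} simply defers to the proofs of Theorems~1.1--1.2 of its reference. Lemma~\ref{lem1} with Remark~\ref{rem1} makes $I_k\to\infty$ uniformly in $k$, and the extremal $\eta_k=\psi_k/I_k$ gives ${\rm cap}_p\,f_k(\mathcal{E}_k)\leqslant\omega_{n-1}/I_k^{p-1}\to 0$. A uniform lower capacity bound then yields the contradiction. Step~1 is fine. So is the case $p=n$, which uses the Martio--Rickman--V\"ais\"al\"a lower bound for ${\rm cap}(\overline{{\Bbb R}^n}\setminus E,C)$ over continua $C$ with $h(C)\geqslant\varepsilon_0$.

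The flaw is in your plan for $n-1<p<n$. The obstacle you describe there is not real, and the remedies you propose would not work. A rotation of the sphere that moves $\infty$, or the inversion $\psi$, is a M\"obius map that is not a Euclidean similarity. For $p\neq n$ it therefore does not preserve $p$-capacity, and the hypothesis on $f_k$ is not inherited by the composed maps.

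No normalisation is needed. Since $h(y,z)\leqslant 2|y-z|$ for $y,z\in{\Bbb R}^n$, you get ${\rm diam}\,C_k\geqslant h(C_k)/2\geqslant\varepsilon_0/2$ directly. Monotonicity of capacity in the open set then gives ${\rm cap}_p(A_k,C_k)\geqslant{\rm cap}_p({\Bbb R}^n,C_k)\geqslant c(n,p)\,(\varepsilon_0/2)^{n-p}$, however large $A_k$ is. Here $C_k\subset{\Bbb R}^n$: for $p\neq n$ the hypothesis only has its classical meaning when $f_k(\mathcal{E}_k)$ is a condenser in ${\Bbb R}^n$, as in the paper's definition.

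Be careful which estimate you quote. The Kruglikov--Maz'ya inequality ${\rm cap}_p(A,C)\geqslant\bigl(c\,({\rm diam}\,C)^p/m(A)^{1-n+p}\bigr)^{1/(n-1)}$ degenerates as $m(A)\to\infty$, so by itself it is not uniform when the images are unbounded. You need the $A$-free bound for ${\rm cap}_p({\Bbb R}^n,C)$. It follows by applying that inequality in $B(a,2\,{\rm diam}\,C)$ with $a\in C$, after comparing the two capacities via a cut-off function and the Sobolev inequality; this is where $p<n$ enters. With this, Step~2 closes and the proof is complete.
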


\medskip
{\it Proof of Lemma~\ref{lem6}} repeats verbatim the proofs of
Theorems~1.1 and 1.2 in \cite{Sev$_5$}, and is therefore omitted.

\medskip
We define for any $x\in D$ and fixed $p,q\geqslant 1$
\begin{equation}\label{eq15.1}
K_{I, q}(x,f)\quad =\quad\left\{
\begin{array}{rr}
\frac{|J(x,f)|}{{l\left(f^{\,\prime}(x)\right)}^q}, & J(x,f)\ne 0,\\
1,  &  f^{\,\prime}(x)=0, \\
\infty, & {\rm otherwise}
\end{array}
\right.\,.\end{equation}

\medskip
Given a mapping $f:D\,\rightarrow\,{\Bbb R}^n,$ a set $E\subset D$
and $y\,\in\,{\Bbb R}^n,$ we define the {\it multiplicity function
$N(y,f,E)$} as a number of preimages of the point $y$ in a set $E,$
i.e.
\begin{equation}\label{eq23}
N(y,f,E)\,=\,{\rm card}\,\left\{x\in E: f(x)=y\right\}\,,
\end{equation}
\begin{equation}\label{eq1G}
N(f,E)\,=\,\sup\limits_{y\in{\Bbb R}^n}\,N(y,f,E)\,.
\end{equation}

\medskip
The following statement was first proved for homeomorphisms and
$x_0\in\overline{D}$ in \cite[Theorem~2.1]{KR},
cf.~\cite[Lemma~2.3]{PSS}.

\medskip
\begin{lemma}{}\label{thOS4.1} {\it\,
Let $D$ be a domain in ${\Bbb R}^n,$ $n\geqslant 3,$
$\varphi:(0,\infty)\rightarrow (0,\infty)$ be a non-decreasing
function satisfying condition
$$
\int\limits_{1}^{\infty}\left(\frac{t}{\varphi(t)}\right)^
{\frac{1}{n-2}}dt<\infty\,.
$$
If $n\geqslant 3$ and $p>n-1,$ then every open discrete mapping
$f:D\rightarrow {\Bbb R}^n$ with finite distortion of the class
$W^{1,\varphi}_{\rm loc}$ such that $N(f, D)<\infty$ is a lower
$Q$-mapping with respect to $p$-modulus at each point
$x_0\in\overline{D}$ for
$$Q(x)=N(f, D)\cdot K^{\frac{p-n+1}{n-1}}_{I, \alpha}(x, f),$$
$\alpha:=\frac{p}{p-n+1},$ where the inner dilatation
$K_{I,\alpha}(x, f)$ of $f$ at $x$ is of order $\alpha$ is defined
by the relation~(\ref{eq15.1}), and the multiplicity $N(f, D)$ is
defined by the second relation in~(\ref{eq1G}).}
\end{lemma}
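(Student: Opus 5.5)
The plan is to adapt the scheme of Kovtonyuk--Ryazanov for homeomorphisms, \cite[Theorem~2.1]{KR}, to open discrete mappings of finite multiplicity. Fix $x_0\in\overline{D}$, $0<\varepsilon<r_0<d_0$, and let $\Sigma_\varepsilon$ be the family of sections $D\cap S(x_0,r)$, $r\in(\varepsilon,r_0)$. Fix an arbitrary $\rho^{\,\prime}\in{\rm adm}\,f(\Sigma_\varepsilon)$, i.e. $\int_{f(S_r)}\rho^{\,\prime\, n-1}\,d\mathcal{A}\geqslant 1$ in the sense of~(\ref{eq8.2.5}) for all $S_r=D\cap S(x_0,r)$. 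From $\rho^{\,\prime}$ I will build $\rho\in{\rm ext}_p\,{\rm adm}\,\Sigma_\varepsilon$ with
$$\int\limits_{D\cap A(x_0,\varepsilon,r_0)}\frac{\rho^p(x)}{Q(x)}\,dm(x)\leqslant\int\limits_{{\Bbb R}^n}\rho^{\,\prime\,p}(y)\,dm(y)\,.$$
Taking the infimum over $\rho^{\,\prime}$ then gives~(\ref{eq1A}).

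\textbf{Step 1 (regularity on spheres).} The growth condition on $\varphi$, together with $n\geqslant 3$, is the Calder\'on-type hypothesis used in \cite{KR}. For a.e. $r\in(\varepsilon,r_0)$ it gives the following: the restriction $f|_{S_r}$ belongs to a Sobolev class on the sphere, is differentiable $\mathcal{H}^{n-1}$-a.e. on $S_r$, and has the Lusin $(N)$-property with respect to $\mathcal{H}^{n-1}$. The needed statements are local, so they transfer from homeomorphisms to our $f$ without change. Consequently, for a.e. such $r$ the area formula for $f|_{S_r}$ holds with the multiplicity $N(f|_{S_r},y)$:
$$\int_{S_r}g(f(x))\,J_{S_r}(x)\,d\mathcal{H}^{n-1}=\int g(y)N(f|_{S_r},y)\,d\mathcal{H}^{n-1}y\,.$$
I also use the pointwise linear-algebra inequality $J_{S_r}(x)\leqslant |J(x,f)|/l(f^{\,\prime}(x))$ at points of differentiability. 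This holds because the product of the $n-1$ largest singular values equals $|J|$ divided by the smallest one. I expect this step, namely the $(N)$-property on a.e. sphere and its compatibility with ${\rm ext}\,{\rm adm}$ (the condition only needs to hold for a.e. $r$), to be the main obstacle. I would import it directly from \cite{KR} and \cite{PSS} rather than reprove it.

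\textbf{Step 2 (the test function).} Let $\rho^{\,\prime}$ be Borel. Where $J(x,f)\ne0$, set $\rho(x)=\rho^{\,\prime}(f(x))\bigl(|J(x,f)|/l(f^{\,\prime}(x))\bigr)^{1/(n-1)}$. On the set where $f^{\,\prime}(x)=0$, set $\rho(x)=0$. Finite distortion makes the remaining set negligible. The resulting function is Borel after modification on a null set. On a.e. sphere the set $\{f^{\,\prime}=0\}$ contributes nothing to the area integral, since $J_{S_r}=0$ there. Step 1 then yields
$$\int_{S_r}\rho^{n-1}\,d\mathcal{H}^{n-1}\geqslant\int_{S_r}\rho^{\,\prime\,n-1}(f)J_{S_r}\,d\mathcal{H}^{n-1}=\int_{f(S_r)}\rho^{\,\prime\,n-1}\,d\mathcal{A}\geqslant1$$
for a.e. $r$. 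Hence $\rho\in{\rm ext}_p\,{\rm adm}\,\Sigma_\varepsilon$.

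\textbf{Step 3 (the volume estimate).} With $\alpha=p/(p-n+1)$ I compute directly
$$\frac{\rho^p}{Q}=\frac{\rho^{\,\prime\,p}(f)\,|J|^{p/(n-1)}\,l^{-p/(n-1)}}{N(f,D)\,|J|^{(p-n+1)/(n-1)}\,l^{-\alpha(p-n+1)/(n-1)}}=\frac{\rho^{\,\prime\,p}(f(x))\,|J(x,f)|}{N(f,D)}\,,$$
because the exponents of $l(f^{\,\prime})$ cancel exactly; this is the reason for the choice of $\alpha$. I then apply the change-of-variables inequality $\int_D g(f(x))|J(x,f)|\,dm\leqslant\int g(y)N(y,f,D)\,dm(y)$, valid for a.e. differentiable Sobolev maps without any $(N)$-property. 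Combined with $N(y,f,D)\leqslant N(f,D)$, this gives $\int\rho^p/Q\,dm\leqslant\int\rho^{\,\prime\,p}\,dm$, which completes the argument.
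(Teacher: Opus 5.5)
Your argument is correct and is essentially the proof the paper relies on. The paper gives no proof of its own, only referring to \cite[Theorem~2.1]{KR} and \cite[Lemma~2.3]{PSS}, and you reproduce that scheme: the Lusin $(N)$-property and a.e. differentiability of $f$ on a.e. sphere $S(x_0,r)$ (a local statement about continuous $W^{1,\varphi}_{\rm loc}$ mappings, so it does transfer from homeomorphisms as you claim), the bound $J_{S_r}\leqslant |J(x,f)|/l(f^{\,\prime}(x))$, the test function $\rho=\rho^{\,\prime}(f)\,(|J(x,f)|/l(f^{\,\prime}(x)))^{1/(n-1)}$, and the area inequality with $N(y,f,D)\leqslant N(f,D)$. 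Your exponent computation $\rho^p/Q=\rho^{\,\prime\,p}(f(x))\,|J(x,f)|/N(f,D)$, which rests on $\alpha(p-n+1)=p$, is right, and the multiplicity on the image side is correctly absorbed by the surface integral~(\ref{eq8.2.5}).
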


The following statement holds, see~\cite[Theorem~5]{Sev$_1$}.

\medskip
\begin{lemma}\label{lem4}
{\, Let $x_0\in \partial D,$ let $f:D\rightarrow {\Bbb R}^n$ be a
bounded, open, discrete, and closed lower $Q$-mapping with respect
to $p$-modulus in a domain $D\subset{\Bbb R}^n,$ $Q\in
L_{loc}^{\frac{n-1}{p-n+1}}({\Bbb R}^n),$ $n-1<p,$ and
$\alpha:=\frac{p}{p-n+1}.$ Then, for every
$\varepsilon_0<d_0:=\sup\limits_{x\in D}|x-x_0|$ and every compact
set $C_2\subset D\setminus B(x_0, \varepsilon_0)$ there exists
$\varepsilon_1,$ $0<\varepsilon_1<\varepsilon_0,$ such that, for
each $\varepsilon\in (0, \varepsilon_1)$ and each compact
$C_1\subset \overline{B(x_0, \varepsilon)}\cap D$ the inequality
\begin{equation}\label{eq3A_2}
M_{\alpha}(f(\Gamma(C_1, C_2, D)))\leqslant \int\limits_{A(x_0,
\varepsilon, \varepsilon_1)}Q^{\frac{n-1}{p-n+1}}(x)
\eta^{\alpha}(|x-x_0|)\,dm(x)
\end{equation}
holds, where $A(x_0, \varepsilon, \varepsilon_1)=\{x\in {\Bbb R}^n:
\varepsilon<|x-x_0|<\varepsilon_1\}$ and $\eta: (\varepsilon,
\varepsilon_1)\rightarrow [0,\infty]$ is an arbitrary Lebesgue
measurable function such that
\begin{equation}\label{eq6B}
\int\limits_{\varepsilon}^{\varepsilon_1}\eta(r)\,dr=1\,.
\end{equation}
}
\end{lemma}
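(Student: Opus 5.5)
The plan is to prove the estimate first with the extremal $\eta$ and only at the end pass to an arbitrary $\eta$ via Proposition~\ref{pr1A}. In the $\eta_0$ form the claim reads
$$M_{\alpha}(f(\Gamma(C_1, C_2, D)))\leqslant \frac{\omega_{n-1}}{I^{\,*\,\alpha-1}}\,,$$
where $I^{\,*}$ is built from the spherical means $q^{\,*}_{x_0}$ of $Q^{\frac{n-1}{p-n+1}}$ as in Lemma~\ref{lem1A}, taken over $(\varepsilon,\varepsilon_1)$. Proposition~\ref{pr1A} says that $\eta_0$ minimizes the right-hand side of (\ref{eq3A_2}) among admissible $\eta$, so this bound gives (\ref{eq3A_2}) for every $\eta$ satisfying (\ref{eq6B}). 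This is exactly how Lemma~\ref{lem5} was obtained from Lemma~\ref{lem1A}. The two new ingredients, compared with Lemma~\ref{lem1A}, are that $x_0\in\partial D$ and that we need a path family joining $C_1$ and $C_2$ rather than a condenser.

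\emph{Choice of $\varepsilon_1$ and separation.} I will take $\varepsilon_1<\varepsilon_0$ arbitrary (if needed, small enough that $f(C_2)$ stays away from $f(\overline{B(x_0,\varepsilon_1)}\cap D)$). Fix $\varepsilon<\varepsilon_1$, a compact set $C_1\subset\overline{B(x_0,\varepsilon)}\cap D$, and $r\in(\varepsilon,\varepsilon_1)$. Every path $\gamma\in\Gamma(C_1,C_2,D)$ meets $S(x_0,r)\cap D$, because $C_1$ lies inside the ball and $C_2$ outside it. Hence every $f\circ\gamma$ meets $f(S(x_0,r)\cap D)$. So the surfaces $f(\Sigma_\varepsilon)$ separate the set $f(\Gamma(C_1,C_2,D))$ in the sense needed for the Ziemer--Hesse type duality between $M_\alpha$ of paths and $M_p$ of $(n-1)$-dimensional surfaces, with $\alpha=p/(p-n+1)$. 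From this duality I expect an inequality of the form
$$M_\alpha(f(\Gamma(C_1,C_2,D)))\leqslant M_p(f(\Sigma_\varepsilon))^{-\frac{1}{n-1}\cdot(\alpha-1)(n-1)}\,,$$
with exponents to be fixed by $\alpha$ and $p$.

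This step is where closedness and boundedness of $f$ are used, and I expect it to be the main obstacle. Because $f$ is open, discrete and closed, it preserves the boundary, and path liftings are total by Proposition~\ref{pr3}. Consequently a path in $f(D)$ from $f(C_1)$ to $f(C_2)$ that avoids $f(S(x_0,r)\cap D)$ lifts to a path in $D$ from $C_1$ that cannot leave $B(x_0,r)\cap D$. I must check carefully that the lift cannot escape through $\partial D$: since $x_0\in\partial D$, the lift could a priori approach $\partial D\cap B(x_0,r)$, and closedness of $f$ is exactly what forbids this. I also need to check that the lift does not end in $f^{-1}(f(C_2))$ while staying inside the ball, which is handled by the choice of $\varepsilon_1$. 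If a clean separation argument in $f(D)$ fails, I will fall back on the condenser $\left(f(D)\setminus f(\overline{D\setminus B(x_0,r)}\,),\,f(C_1)\right)$, which is legitimate because $f$ is closed, and estimate its $\alpha$-capacity as in Lemma~\ref{lem1A}.

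\emph{Estimate of $M_p(f(\Sigma_\varepsilon))$.} Next, (\ref{eq1A}) gives $M_p(f(\Sigma_\varepsilon))\geqslant\inf_\rho\int\rho^p/Q$ over $\rho\in{\rm ext}_p\,{\rm adm}\,\Sigma_\varepsilon$. The constraint $\int_{S(x_0,r)\cap D}\rho^{n-1}\,d\mathcal{H}^{n-1}\geqslant1$ holds for a.e.\ $r$. Applying H\"older's inequality on each sphere with exponents matched to $p/(n-1)$, and then integrating in $r$ by Fubini, I expect a lower bound in terms of $\int_\varepsilon^{\varepsilon_1}dt/\bigl(t^{\frac{n-1}{\alpha-1}}q_{x_0}^{*\,\frac{1}{\alpha-1}}(t)\bigr)=I^{\,*}$. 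Here $Q^{\frac{n-1}{p-n+1}}\in L^1_{loc}$ is what makes $q^{\,*}_{x_0}$ finite a.e. This is the same computation that underlies Lemma~\ref{lem1A}, now carried out on $S(x_0,r)\cap D$ in place of full spheres; extending $Q$ by zero outside $D$ causes no harm. Combining this with the duality inequality yields the displayed bound $M_\alpha\leqslant\omega_{n-1}/I^{\,*\,\alpha-1}$, and Proposition~\ref{pr1A} with $Q^{\frac{n-1}{p-n+1}}$ and exponent $\alpha$ then finishes the proof.
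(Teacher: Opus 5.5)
The paper gives no proof of this lemma; it is quoted from \cite[Theorem~5]{Sev$_1$}. Your architecture is the standard route:
\begin{itemize}
\item use images of spheres as separating sets;
\item apply the Ziemer--Hesse duality to get $M_{\alpha}(f(\Gamma(C_1,C_2,D)))\leqslant M_p(f(\Sigma_\varepsilon))^{-(\alpha-1)}$;
\item apply H\"older's inequality on $S(x_0,r)\cap D$ with exponents $\frac{p}{n-1}$ and $\frac{p}{p-n+1}$ to get $M_p(f(\Sigma_\varepsilon))\geqslant\omega_{n-1}^{-1/(\alpha-1)}I^{\,*}$;
\item finish with Proposition~\ref{pr1A}.
\end{itemize}
The exponents match because $\alpha-1=\frac{n-1}{p-n+1}$. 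The multiplicity built into $M_p(f(\Sigma_\varepsilon))$ can only lower that modulus, so it only weakens the bound and is harmless.

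\textbf{The gap is in the duality step.} The Ziemer--Hesse bound for $M_\alpha(\Gamma(E,F,G))$ is stated for sets $\sigma$ that \emph{separate} $E$ from $F$ in $G$. This means $\sigma$ is closed in $G$ and $G\setminus\sigma=U\cup V$ with $U,V$ disjoint open sets, $E\subset U$, $F\subset V$; in particular $\sigma\cap E=\varnothing$. Your lifting argument only shows that $f(S(x_0,r)\cap D)$ \emph{meets} every path from $f(C_1)$ to $f(C_2)$ in $f(D)$. Since $f$ is not injective, $f(S(x_0,r)\cap D)$ may meet $f(C_1)$, possibly for whole ranges of $r$, and then it is not a separating set.

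The hitting property alone does not give the bound. For $p<n$ one has $\alpha>n$. The point $\{a\}$ meets every path of $\Gamma(\{a\},F,G)$, but being $\mathcal{H}^{n-1}$-null it admits no admissible density, so the would-be bound is $0$, whereas $M_\alpha(\Gamma(\{a\},F,G))>0$.

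Your fallback condenser has a related defect. A point of $f(C_1)$ may also be the image of a point outside $B(x_0,r)$, so in general $f(C_1)\not\subset f(D)\setminus f(D\setminus B(x_0,r))$.

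\textbf{The repair is short and makes the liftings unnecessary.} For $r\in(\varepsilon,\varepsilon_1)$ put $U_r=f(B(x_0,r)\cap D)$ and $\sigma_r=\partial U_r\cap f(D)$.
\begin{itemize}
\item Since $f$ is open, $U_r$ is open and contains $f(C_1)$.
\item Since $f$ is closed, $f(\overline{B(x_0,r)}\cap D)$ is closed in $f(D)$, hence $\overline{U_r}\cap f(D)\subset f(\overline{B(x_0,r)}\cap D)$.
\item Consequently $\sigma_r\subset f(S(x_0,r)\cap D)$, and $\overline{U_r}\cap f(C_2)=\varnothing$ as soon as $f(\overline{B(x_0,\varepsilon_1)}\cap D)\cap f(C_2)=\varnothing$.
\end{itemize}
Thus $f(D)\setminus\sigma_r=U_r\cup(f(D)\setminus\overline{U_r})$ exhibits $\sigma_r$ as a genuine separating set. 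Because $\sigma_r\subset f(S(x_0,r)\cap D)$, where the multiplicity is at least $1$, every $\rho$ with $\int_{\sigma_r}\rho^{n-1}\,d\mathcal{H}^{n-1}\geqslant 1$ for all $r$ is admissible for $f(\Sigma_\varepsilon)$. So the $p$-modulus of $\{\sigma_r\}$ is at least $M_p(f(\Sigma_\varepsilon))$, and the duality yields exactly your inequality.

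\textbf{The choice of $\varepsilon_1$ also needs an argument.} It cannot be arbitrary, which is why the statement says ``there exists''. An open discrete closed map has finite fibres and is therefore proper, so $f^{-1}(f(C_2))$ is a compact subset of $D$. Any $\varepsilon_1<\min\{\varepsilon_0,{\rm dist}\,(x_0,f^{-1}(f(C_2)))\}$ then gives $f(\overline{B(x_0,\varepsilon_1)}\cap D)\cap f(C_2)=\varnothing$.

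With these two points supplied, the rest of your plan goes through: the H\"older estimate on $S(x_0,r)\cap D$ with $Q$ extended by zero, and the passage to arbitrary $\eta$.
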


\medskip
Given $N\in {\Bbb N},$ $M>0,$ $\delta>0,$ $\alpha\geqslant 1,$
closed sets $E_*, F$ in $\overline{{\Bbb R}^n},$ $n\geqslant 3,$ a
domain $D\subset {\Bbb R}^n,$ a closed (with respect to $D$) set $E$
in $D,$ an increasing function
$\varphi:[0,\infty)\rightarrow[0,\infty)$ and an increasing convex
function $\Phi:\overline{{\Bbb R^{+}}}\rightarrow \overline{{\Bbb
R^{+}}}$ we denote by $\frak{R}^{E_*, E, F, N, M}_{\varphi, \Phi,
\delta, \alpha}(D)$ a (some) family of bounded open discrete
mappings $f:D\rightarrow \overline{{\Bbb R}^n}\setminus F$ for which
there exists a Lebesgue measurable function $Q=Q_f:D\rightarrow [0,
\infty]$ such that $K_{I, \alpha}(x, f)\leqslant Q(x)$ a.e.,
(\ref{eq1D}) holds, $N(f, D)\leqslant N$ and, in addition,

\medskip
1) $C(f, \partial D)\subset E_*,$

\medskip
2) for each component $K$ of $D^{\,\prime}_f\setminus  E_*,$
$D^{\,\prime}_f:=f(D),$ there is a continuum $K_f\subset K$ such
that $h(K_f)\geqslant \delta$ and $h(f^{\,-1}(K_f), \partial
D)\geqslant \delta>0,$

\medskip
3) $f^{\,-1}(E_*)=E.$

\medskip
The following statement holds.

\medskip
\begin{theorem}\label{th3} {\, Let $\alpha\in(n-1, n],$
let $D$ be a bounded domain in ${\Bbb R}^n,$ $n\geqslant 2.$ Assume
that:

\medskip
1) the set $E$ is nowhere dense in $D,$ and $D$ is finitely
connected on $E\cup \partial D,$ i.e., for any $z_0\in E\cup
\partial D$ and any neighborhood $\widetilde{U}$ of $z_0$ there is a
neighborhood $\widetilde{V}\subset \widetilde{U}$ of $z_0$ such that
$(D\cap \widetilde{V})\setminus E$ consists of finite number of
components;

\medskip
2) for any $x_0\in\partial D$ there is $m=m(x_0)\in {\Bbb N},$
$1\leqslant m<\infty$ such that the following is true: for any
neighborhood $U$ of $x_0$ there is a neighborhood $V\subset U$ of
$x_0$ and  such that:

\medskip
2a) $V\cap D$ is connected,

\medskip
2b) $(V\cap D)\setminus E$ consists at most of $m$ components;

\medskip
3) for some $\delta_0>\tau_0:=\Phi(0),$
\begin{equation}\label{eq2A} \int\limits_{\delta_0}^{\infty}
\frac{d\tau}{\tau\left[\Phi^{-1}(\tau)\right]^{\frac{1}{p-1}}}=
\infty;
\end{equation}

\medskip
4) the function $\varphi$ satisfies Calderon condition
\begin{equation}\label{eq1_A_10}
\int\limits_{1}^{\infty}\left(\frac{t}{\varphi(t)}\right)^
{\frac{1}{n-2}}\,dt<\infty\,.
\end{equation}

\medskip
Let for $\alpha=n$ the set $F$ have positive capacity, and for
$n-1<\alpha<n$ it is an arbitrary closed set. Let the family of all
components of $D^{\,\prime}_f\setminus E_*$ is equi-uniform over
$f\in\frak{R}^{E_*, E, F, N, M}_{\varphi, \Phi, \delta, \alpha}(D)$
with respect to $\alpha$-modulus. Then every $f\in\frak{R}^{E_*, E,
F, N, M}_{\varphi, \Phi, \delta, \alpha}(D)$ has a continuous
extension to $\partial D$ and the family $\frak{R}^{E_*, E, F, N,
M}_{\varphi, \Phi, \delta, \alpha}(\overline{D}),$ consisting of all
extended mappings $\overline{f}: \overline{D}\rightarrow
\overline{{\Bbb R}^n},$ is equicontinuous in~$\overline{D}.$ The
equicontinuity must be understood in the sense of the chordal metric
$h.$ }
\end{theorem}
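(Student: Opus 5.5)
The plan is to reduce Theorem~\ref{th3} to the scheme already used in the proof of Theorem~\ref{th1}, with the upper Poletsky inequality replaced by the capacity/modulus estimates available for lower $Q$-mappings. Set $p:=\frac{\alpha(n-1)}{\alpha-1}$, so that $\alpha=\frac{p}{p-n+1}$ and $p>n-1$; here $\alpha\in(n-1,n]$ corresponds to $p\geqslant n$. By Lemma~\ref{thOS4.1}, every $f\in\frak{R}^{E_*, E, F, N, M}_{\varphi, \Phi, \delta, \alpha}(D)$ is a lower $Q_*$-mapping with respect to $p$-modulus at every point of $\overline{D}$. Here
$$Q_*=N(f,D)\,K_{I,\alpha}^{\frac{p-n+1}{n-1}}(x,f),$$
so that $Q_*^{\frac{n-1}{p-n+1}}\leqslant N^{\frac{n-1}{p-n+1}}Q_f$ a.e. This uses the Calder\'on condition~(\ref{eq1_A_10}) and $N(f,D)\leqslant N$. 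The exponent is chosen precisely so that $Q_*^{\frac{n-1}{p-n+1}}$ is, up to the constant $c_N=N^{\frac{n-1}{p-n+1}}$, dominated by $Q_f$, which obeys the integral bound~(\ref{eq1D}).

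\emph{Interior equicontinuity.} Lemma~\ref{lem5} gives the capacity inequality for ${\rm cap}_\alpha(f(\mathcal E))$ with majorant $c_NQ_f$. Replacing $\Phi$ by $\Phi_N(t):=\Phi(t/c_N)$ keeps convexity and~(\ref{eq1D}). It also keeps the divergence condition~(\ref{eq2A}), taken with exponent $\frac1{\alpha-1}$, since $\Phi_N^{-1}=c_N\Phi^{-1}$ only changes the integrand by a constant factor. Lemma~\ref{lem6}, applied with $p$ replaced by $\alpha$ and with $E=F$, then yields equicontinuity of the family at every $x_0\in D$.

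\emph{Boundary points $x_0\in\partial D$.} Follow the proof of Theorem~\ref{th1} verbatim up to~(\ref{eq15A}). Argue by contradiction with $x_k,x_k'\to x_0$, $x_k,x_k'\notin E$, and use Lemma~\ref{lem2} to obtain paths $\gamma_k$ meeting $E$ in at most $m-1$ points. Condition~3), $f^{-1}(E_*)=E$, gives at most $m-1$ points of $f_k(\gamma_k)$ in $E_*$. This produces continua $\widetilde G_k\subset K_k\setminus E_*$ of chordal diameter at least $\varepsilon_0/(4m)$. Equi-uniformity then gives $M_\alpha(\Gamma(\widetilde G_k,K_{f_k},K_k))\geqslant P>0$. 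Lifting these paths by Proposition~\ref{pr3}, and using $C(f_k,\partial D)\subset E_*$, shows the lifts are complete and end in $f_k^{-1}(K_{f_k})$, which stays outside $B(x_0,r_0)$.

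\emph{The main obstacle.} We now need an upper bound for $M_\alpha(f_k(\Gamma(C_1,C_2,D)))$; lower $Q$-mappings give this only through Lemma~\ref{lem4}, which requires $f$ to be closed. I would handle this by restricting $f_k$ to the component $D_k$ of $f_k^{-1}(K_k)$ containing the lifts. Since $f_k^{-1}(E_*)=E$ and $C(f_k,\partial D)\subset E_*$, the map $f_k|_{D_k}:D_k\to K_k$ is boundary preserving, hence closed. The points $\gamma_k(p_{s(k)})$ lie in $D_k$ arbitrarily close to $x_0$, so $x_0\in\partial D_k$. The finite connectedness of $D$ on $E\cup\partial D$, assumed in~1), is what should make $D_k$ regular enough near $x_0$ to apply Lemma~\ref{lem4}. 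Take $C_1$ to be the compact piece of $\gamma_k$ near $x_0$ and $C_2$ the compact set of lift endpoints inside $f_k^{-1}(K_{f_k})$. Lemma~\ref{lem4}, with $\eta$ the normalized optimal function from Proposition~\ref{pr1A} built on $c_NQ_{f_k}$, then gives
$$M_\alpha(\Gamma_k)\leqslant \frac{\omega_{n-1}\,c_N^{\,\cdot}}{I_k^{\alpha-1}}\,,$$
where $I_k$ is formed with exponent $\alpha$ over $(2^{-k},\varepsilon_1)$. By Lemma~\ref{lem1} and Remark~\ref{rem1} (with $p$ replaced by $\alpha$), uniformly in $k$ we have $I_k\to\infty$, contradicting the lower bound $P$. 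Checking that $\varepsilon_1$ in Lemma~\ref{lem4} can be chosen uniformly in $k$ is the delicate point; I expect it to follow since $C_2$ stays at distance at least $r_0$ from $x_0$.

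Continuity of the extension at $x_0$ and equicontinuity in $\overline{D}$ then follow exactly as at the end of the proof of Theorem~\ref{th1}. The case $x_0=\infty$ does not arise, because $D$ is bounded.
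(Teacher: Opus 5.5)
Your route is the paper's route. You use Lemma~\ref{thOS4.1} with Lemmas~\ref{lem5} and~\ref{lem6} at interior points. At boundary points you use the lifting argument of Theorem~\ref{th1}, restricting $f_k$ to a piece of $D\setminus E$ on which it is closed, so that Lemmas~\ref{lem4} and~\ref{lem1} replace the upper Poletsky inequality. Your check that $f_k|_{D_k}$ is boundary preserving (via $f_k^{-1}(E_*)=E$ on $E$ and $C(f_k,\partial D)\subset E_*$ on $\partial D$) is more explicit than the paper's. Two steps, however, are not correctly justified.

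First, the inference ``$\gamma_k(p_{s(k)})\in D_k$ lies arbitrarily close to $x_0$, so $x_0\in\partial D_k$'' is invalid. For fixed $k$ you only know that the compact set $|\nabla_k|\subset D_k$ lies in $B(x_0,2^{-k})$, and $D_k$ changes with $k$. Note that $D_k$ is a component of $D\setminus E$, since $f_k^{-1}(f_k(D)\setminus E_*)=D\setminus E$. The missing step is the actual role of hypothesis~1) on $E\cup\partial D$, not any regularity of $D_k$ near $x_0$. Since $\overline{D}$ is compact, finitely many neighbourhoods from~1), together with balls in $D\setminus E$, cover it. Hence $D\setminus E$ has finitely many components. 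Passing to a subsequence, all $|\nabla_k|$ lie in one fixed component $D_1$. This gives $x_0\in\partial D_1$, and Lemmas~\ref{thOS4.1} and~\ref{lem4} are applied in the fixed domain $D_1$. This is how the paper proceeds.

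Second, the uniformity of $\varepsilon_1$ is essential: if $\varepsilon_1$ shrank with $k$, you would lose $I_k\to\infty$. The reason you give for it is not the right one. In the separation argument behind Lemma~\ref{lem4}, $B(x_0,\varepsilon_1)$ must miss the full preimage $f^{-1}(f(C_2))$. Only then do the sets $f(S(x_0,r)\cap D_1)$, $\varepsilon<r<\varepsilon_1$, separate $f(C_1)$ from $f(C_2)$ in $f(D_1)$. A set $C_2$ far from $x_0$ may still have points of $f^{-1}(f(C_2))$ arbitrarily close to $x_0$.

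The fix is to take $C_2:=f_k^{-1}(K_{f_k})\cap D_1$. This set contains all endpoints of the lifts; the bare set of endpoints need not be compact. It is compact, because the cluster set of $f_k|_{D_1}$ on $\partial D_1$ lies in $E_*$ while $K_{f_k}$ is compact and disjoint from $E_*$. Then $f_k^{-1}(f_k(C_2))\subset f_k^{-1}(K_{f_k})$. Condition~2) of the class (chordal distance at least $\delta$ from $\partial D$) keeps this set outside $B(x_0,r_0)$ for every $k$. So $\varepsilon_1=r_0$ works for all $k$, which is what the paper uses tacitly when it integrates over $A(x_0,2^{-k},r_0)$. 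With these two repairs your argument is complete.
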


\medskip
\begin{proof}
Let $x_0\in D.$ By Lemma~\ref{thOS4.1}, every $f\in \frak{R}^{E_*,
E, F, N, M}_{\varphi, \Phi, \delta, \alpha}(D)$ is a lower
$Q$-mapping relative to the $p$-modulus at every point
$x_0\in\overline {D}$ for $Q(x)=N\cdot K^{\frac{p-n+1}{n-1}}_{I,
\alpha}(x, f),$ where
$\alpha=\frac{p}{p-n+1}.$ Now, by Lemma~\ref{lem5} $f$ satisfies the
relation
$${\rm cap}_{\alpha}(f({\mathcal{E}}))\leqslant \int\limits_{A(x_0,
r_1, r_2)} N^{\frac{n-1}{p-n+1}}\cdot K_{I, \alpha}(x,
f)\eta^{\,\alpha}(|x-x_0|)\,dm(x)$$
$$\leqslant
 N^{\frac{n-1}{p-n+1}}\cdot\int\limits_{A(x_0, r_1, r_2)} Q(x)\eta^{\,\alpha}(|x-x_0|)\,dm(x)$$
for any Lebesgue measurable function $\eta :(r_1,r_2)\rightarrow
[0,\infty]$ such that
$\int\limits_{r_1}^{r_2}\eta(r)\,dr\geqslant 1.$ Now, the
equicontinuity of the family $\frak{R}^{E_*, E, F, N, M}_{\varphi,
\Phi, \delta, \alpha}(D)$ at inner points $x_0\in D$ follows by
Lemma~\ref{lem6}.

\medskip
Now, we prove the following statement: for any $x_0\in \partial D,$
$x_0\ne \infty,$ and for every $\varepsilon>0$ there is
$\delta=\delta(\varepsilon, x_0)>0$ such that
\begin{equation}\label{eq1B}
h(f(x), f(y))<\varepsilon
\end{equation}
for any $f\in\frak{R}^{E_*, E, F, N, M}_{\varphi, \Phi, \delta,
\alpha}(D)$ and any $x, y\in B(x_0, \delta)\cap D.$

\medskip
Suppose the opposite. Then there is $x_0\in\partial D,$
$\varepsilon_0>0,$ a sequences $x_k, x^{\,\prime}_k\in D,$ $x_k,
x^{\,\prime}_k\rightarrow x_0$ as $k\rightarrow \infty,$ and $f_k\in
\frak{R}^{E_*, E, F, N, M}_{\varphi, \Phi, \delta, \alpha}(D)$ such
that
\begin{equation}\label{eq13D}
h(f_k(x_k), f_k(x^{\,\prime}_k))\geqslant \varepsilon_0/2\,.
\end{equation}
By the assumption~2), there exists a sequence of neighborhoods
$V_k\subset B(x_0, 2^{\,-k}),$ $k=1,2,\ldots ,$ such that $V_k\cap
D$ is connected and $(V_k\cap D)\setminus E$ consists of $m$
components, $1\leqslant m<\infty.$

Arguing similarly to the proof of Lemma~\ref{lem1}, we may consider
that $x_k, x^{\,\prime}_k\not\in E.$ Now, by Lemma~\ref{lem2} there
are subsequences $x_{k_l}$ and $x^{\,\prime}_{k_l},$ $l=1,2,\ldots
,$ belonging to some sequence of neighborhoods $V_l,$ $l=1,2,\ldots
,$ of the point $x_0$ such that ${\rm diam\,}V_l\rightarrow 0$ as
$l\rightarrow\infty$ and, in addition, any pair $x_{k_l}$ and
$x^{\,\prime}_{k_l}$ may be joined by a path $\gamma_l$ in $V_l\cap
D,$ where $\gamma_l$ contains at most $m-1$ points in $E.$ Without
loss of generality, we may assume that the same sequences $x_k$ and
$y_k$ satisfy properties mentioned above. Let $\gamma_k:[0,
1]\rightarrow D,$ $\gamma_k(0)=x_k$ and
$\gamma_k(1)=x^{\,\prime}_k,$ $k=1,2,\ldots .$

\medskip
Observe that, the path $f_k(\gamma_k)$ contains not more than $m-1$
points in $E_*.$ Let
$$b_{1}=f_k(\gamma_k(t_1)), b_{2}=f_k(\gamma_k(t_2))\quad,\ldots,\quad
b_l=f_k(\gamma_k(t_l))\,,$$ $$t_0:=0\leqslant t_1\leqslant
t_2\leqslant \ldots\leqslant t_l\leqslant 1=t_{l+1},\qquad
0\leqslant l\leqslant m-1\,,$$
be points in $f_k(\gamma_k)\cap E_*.$ By the relation~(\ref{eq13A})
and due to the triangle inequality,
\begin{equation}\label{eq6C_1}
\varepsilon_0/2\leqslant h(f_k(x_k),
f_k(x^{\,\prime}_k))\leqslant\sum\limits_{r=0}^{l}
h(f_k(\gamma_k(t_r)), f_k(\gamma_k(t_{r+1}))\,.
\end{equation}
It follows from~(\ref{eq6C_1}) that, there is $1\leqslant
r=r(k)\leqslant m-1$ such that
\begin{equation}\label{eq7F_1}
h(f(\gamma_k(t_{r(k)})), f_k(\gamma_k(t_{r(k)+1})))\geqslant
\varepsilon_0/(2(l+1))\geqslant \varepsilon_0/(2m)\,.
\end{equation}
Observe that, due to~(\ref{eq7F_1}) the set
$G_k:=|f(\gamma_k)|_{(t_{r(k)}, t_{r(k)+1})}|$ belongs to
$D^{\,\prime}\setminus E_*$ and that $G_k$ contains some a continuum
$\widetilde{G}_k$ with $h(\widetilde{G}_k)\geqslant
\varepsilon_0/(4m)$ for any $k\in {\Bbb N}.$

Since $\widetilde{G}_k$ is a continuum in
$D^{\,\prime}_{f_k}\setminus E_*,$ there is a component $K_k$ of
$D^{\,\prime}_{f_k}\setminus E_*,$ containing $K_k.$ Let us apply
the definition of equi-uniformity for he sets $\widetilde{G}_k$ and
$K_{f_k}$ in $K_k$ (here $K_{f_k}$ is a continuum from the
definition of the class $\frak{R}^{E_*, E, F, N, M}_{\varphi, \Phi,
\delta, \alpha}(D)$, in particular, $h(K_{f_k})\geqslant \delta$).
Due to this definition, for the number $\delta_*:=\min\{\delta,
\varepsilon/4m\}>0$ there is $P>0$ such that
\begin{equation}\label{eq1F_1}
M_p(\Gamma(\widetilde{G}_k, K_{f_k}, K_k))\geqslant P
>0\,, \qquad k=1,2,\ldots\,.
\end{equation}
Let us to show that, the relation~(\ref{eq1F_1}) contradicts with
the definition of $\frak{R}^{E_*, E, F, N, M}_{\varphi, \Phi,
\delta, \alpha}(D)$ together with conditions~(\ref{eq1D}),
(\ref{eq2A}). Indeed, let us denote by $\Gamma_k$ the family of all
half-open paths $\beta_k:[a, b)\rightarrow \overline{{\Bbb R}^n}$
such that $\beta_k(a)\in \widetilde{G}_k,$ $\beta_k(t)\in K_k$ for
all $t\in [a, b)$ and, moreover, $\lim\limits_{t\rightarrow
b-0}\beta_k(t):=B_k\in K_{f_k}.$ Obviously, by~(\ref{eq1F_1})
\begin{equation}\label{eq4C_1}
M_p(\Gamma_k)=M_p(\Gamma(\widetilde{G}_k, K_{f_k}, K_k))\geqslant P
>0\,, \qquad k=1,2,\ldots\,.
\end{equation}
As under the proof of Lemma~\ref{lem1}, we may prove that
$D\setminus E$ consists of finite number of components $D_1,\ldots,
D_s,$ $1\leqslant s<\infty,$ while $f_k$ is closed in each $D_i,$
$i=1,2,\ldots, $ i.e. $f_k(E)$ is closed in $f_k(D_i)$ whenever $E$
is closed in $D_i.$ We also may show that $f_k(D_i)=K$ for some
$i\in {\Bbb N}.$ Without loss of generality, we may consider that
$\nabla_k:=\gamma_k|_{[p_{s(k)}, q_{s(k)}]}$ belong to (some) one
component $D_1$ of $D\setminus E$ and $f_k(D_1)=K,$ where
$t_{r(k)}<p_{s(k)}<q_{s(k)}<t_{r(k)+1},$ $a_{s,
k}:=f_k(\gamma_k(p_s))\rightarrow w_k:=f_k(\gamma_k(t_{r(k)}))$ as
$s\rightarrow \infty$ and $b_{s, k}=f_k(\gamma_k(q_s))\rightarrow
f_k(\gamma_k(t_{r(k)+1}))$ as $s\rightarrow \infty.$ Consider the
family $\Gamma_k^{\,\prime}$ of all maximal $f_k$-liftings
$\alpha_k:[a, c)\rightarrow D$ of the family $\Gamma_k$ starting at
$|\nabla_k|;$ such a family exists by Proposition~\ref{pr3}.

Observe that, the situation when $\alpha_k\rightarrow \partial D_1$
as $k\rightarrow\infty$ is impossible. Suppose the opposite: let
$\alpha_k(t)\rightarrow \partial D_1$ as $t\rightarrow c.$ We choose
an arbitrary sequence $\varphi_m\in [0, c)$ such that
$\varphi_m\rightarrow c-0$ as $m\rightarrow\infty.$ Since the space
$\overline{{\Bbb R}^n}$ is compact, the boundary $\partial D_1$ is
also compact as a closed subset of the compact space. Then there
exists $w_m\in
\partial D_1$ such that
\begin{equation}\label{eq7G_1}
h(\alpha_k(\varphi_m), \partial D_1)=h(\alpha_k(\varphi_m), w_m)
\rightarrow 0\,,\qquad m\rightarrow \infty\,.
\end{equation}
Due to the compactness of $\partial D_1$, we may assume that
$w_m\rightarrow w_0\in \partial D_1$ as $m\rightarrow\infty.$
Therefore, by the relation~(\ref{eq7G_1}) and by the triangle
inequality
\begin{gather}
h(\alpha_k(\varphi_m), w_0)\nonumber \\ \label{eq8C_1} \leqslant
h(\alpha_k(\varphi_m), w_m)+h(w_m, w_0)\rightarrow 0\,,\qquad
m\rightarrow \infty\,.
\end{gather}
There are two cases: $w_0\in \partial D,$ or $w_0\in D.$ In the
first case,
\begin{equation}\label{eq9C_1}
f_k(\alpha_k(\varphi_m))=\beta_k(\varphi_m)\rightarrow \beta(c)
\,,\quad m\rightarrow\infty\,,
\end{equation}
because by the construction the path $\beta_k(t),$ $t\in [a, b],$
lies in $K_k\subset D^{\,\prime}_{f_k}\setminus E_*$ together with
its finite ones points. At the same time, by~(\ref{eq8C_1})
and~(\ref{eq9C_1}) we have that $\beta_k(c)\in C(f_k,
\partial D)\subset E_*$ by the definition of the class
$\frak{R}^{E_*, E, F, N, M}_{\varphi, \Phi, \delta, \alpha}(D)$ and
because $C(f_k, \partial D)\subset E_*.$ The inclusions
$\beta_k\subset D^{\,\prime}\setminus E_*$ and $\beta_k(c)\in E_*$
contradict each other.

\medskip
In the second case, when $w_0\in D,$ we have that $w_0\in E.$
However,
$f_k(\alpha_k(\varphi_m))=\beta_k(\varphi_m)\rightarrow\beta_k(c)$
and consequently, $\beta_k(c)\in f_k(E).$ But now $\beta_k(c)\in
E_*$ because $f_k^{\,-1}(E_*)=E.$ The latter contradicts the
definition of $\beta_k.$ Thus, the case $\alpha_k\rightarrow
\partial D_1$ as $k\rightarrow\infty$ is impossible, as required.

\medskip
Therefore, by Proposition~\ref{pr3} $\alpha_k\rightarrow x_1\in D_1$
as $t\rightarrow c-0,$ and $c_b$ and $f_k(\alpha_k(b))=f_k(x_1).$ In
other words, the $f_k$-lifting $\alpha_k$ is complete, i.e.,
$\alpha_k:[a, b]\rightarrow D.$ Besides that, it follows from that
$\alpha_k(b)\in f_k^{\,-1}(K_{f_k}).$

\medskip
Again, by the definition of the class $\frak{R}^{E_*, E, F, N,
M}_{\varphi, \Phi, \delta, \alpha}(D),$
\begin{equation}\label{eq13B_1}
h(f_k^{\,-1}(K_{f_k}), \partial D)\geqslant \delta>0\,.
\end{equation}
Since $x_0\ne\infty,$ it follows from~(\ref{eq13B_1}) that
\begin{equation}\label{eq14C_1}
f_k^{\,-1}(K_{f_k})\subset D\setminus B(x_0, r_0)
\end{equation}
for any $k\in {\Bbb N}$ and some $r_0>0.$ Let $k$ be such that
$2^{\,-k}<\varepsilon_0.$ We may consider that $r_0<\varepsilon_0,$
where $\varepsilon_0$ is a number from the conditions of the lemma.
Applying now Lemmas~\ref{thOS4.1} and~\ref{lem4} together
with~(\ref{eq14C_1}), we obtain for $\alpha:=\frac{p}{p-n+1}$ that
\begin{gather}
M_{\alpha}(f(\Gamma_k^{\,\prime}))=M_{\alpha}(\Gamma(\widetilde{G}_k,
C_0^{\,\prime}, K))\nonumber \\ \label{eq15A_1} \leqslant
N^{\frac{n-1}{p-n+1}}\int\limits_{A(x_0, 2^{\,-k},
r_0)}Q_{f_k}(x)\eta^{\,\alpha}(|x-x_0|)\,dm(x)
\end{gather}
for sufficiently large $k\in {\Bbb N}$ and for any nonnegative
Lebesgue measurable function $\eta$ satisfying the
relation~(\ref{eqA2}) with $\varepsilon=2^{\,-k}$ and
$\varepsilon_0=r_0.$ Now, we set
\begin{equation}\label{eq9_1} I_k=I(x_0, 2^{\,-k},
r_0)=\int\limits_{2^{\,-k}}^{r_0}\
\frac{dr}{r^{\frac{n-1}{p-1}}\widetilde{q}_{k,
x_0}^{\frac{1}{p-1}}(r)}\,,
\end{equation}
where
$$\widetilde{q}_{k, x_0}(r)=\frac{1}{\omega_{n-1}r^{n-1}} \int\limits_{S(x_0,
r)}\widetilde{Q}_{f_{m_k}}(x)\,d\mathcal{H}^{n-1}\,$$ and
$$\widetilde{Q}_{f_{m_k}}(x)=\begin{cases}
Q_{f_{m_k}}(x), & Q(x)\geqslant 1\\ 1, & Q(x)<1\end{cases}\,.$$

By Lemma~\ref{lem1} and Remark~\ref{rem1}
\begin{equation}\label{eq2_1_1}
I_k=\int\limits_{2^{\,-k}}^{r_0}\
\frac{dr}{r^{\frac{n-1}{p-1}}\widetilde{q}_{k,
x_0}^{\frac{1}{p-1}}(r)}\rightarrow\infty
\end{equation}
as $k\rightarrow\infty.$
Since $\widetilde{q}_{k, x_0}(r)\geqslant 1$ for a.e. $r$ and
by~(\ref{eq2_1_1}), $0<I_k<\infty$ for sufficiently large $k\in
{\Bbb N}.$ Set
$$\psi_k(t)= \left \{\begin{array}{rr}
1/[t^{\frac{n-1}{p-1}}\widetilde{q}_{k, x_0}^{\frac{1}{p-1}}(t)], &
t\in (2^{\,-k}, r_0)\ ,
\\ 0,  &  t\notin (2^{\,-k}, r_0)\ .
\end{array} \right. $$
Let $\eta_k(t)=\psi_k(t)/I_k$ for $t\in (2^{\,-k}, r_0)$ and
$\eta_k(t)=0$ otherwise. Now, $\eta_k$ satisfies~(\ref{eqA2}) for
$r_1=2^{\,-k}$ and $r_2=r_0.$ Now, by~(\ref{eq2*!A}),
(\ref{eq15A_1}), (\ref{eq2_1_1}) and by Fubini's theorem,
\begin{equation}\label{eq3_1_1}
M_p(f_k(\Gamma^{\,\prime}_k)\leqslant\frac{N^{\frac{n-1}{p-n+1}}}{I^{p}_k}\int\limits_{A(x_0,
2^{\,-k}, r_0)}
\widetilde{Q}_{f_{m_k}}(x)\cdot\psi_k^p(|x-x_0|)\,dm(x)=\frac{\omega_{n-1}}{I^{p-1}_k}\rightarrow
0
\end{equation}
as $k\rightarrow\infty.$ Thus, by~(\ref{eq3_1_1}) and
(\ref{eq15A_1})--(\ref{eq9_1}) we obtain that
\begin{equation}\label{eq3A_3}
M_p(\Gamma_k)= M_p(f_k(\Gamma_k^{\,\prime}))\leqslant
\Delta(k)\rightarrow 0
\end{equation}
as $k\rightarrow \infty.$ However, the relation~(\ref{eq3A_3})
together with the inequality~(\ref{eq4C_1}) contradict each other,
which proves~(\ref{eq1B}).

\medskip
Observe that, the possibility of a continuous extension of any $f\in
\frak{R}^{E_*, E, F, N, M}_{\varphi, \Phi, \delta, \alpha}(D)$ at
$x_0,$ $x_0\ne \infty,$ follows from~(\ref{eq1B}), as well. Indeed,
otherwise we have $h(f(x_k), f(y_k))\geqslant \varepsilon_0$ for
some sequences $x_k, y_k\rightarrow x_0$ as $k\rightarrow\infty,$
but this contradicts with~(\ref{eq1B}).

\medskip
Now, we prove that $\frak{R}^{E_*, E, F, N, M}_{\varphi, \Phi,
\delta, \alpha}(D)$ is equicontinuous at $x_0$ with a respect to
$\overline{D}.$ Suppose the opposite. Then there is $x_0\in\partial
D,$ $\varepsilon_0>0,$ a sequence $x_k\rightarrow x_0,$
$x_k\in\overline{D},$ and $f_k\in \frak{R}^{E_*, E, F, N,
M}_{\varphi, \Phi, \delta, \alpha}(\overline{D})$ such that
\begin{equation}\label{eq12D}
h(f_k(x_k), f_k(x_0))\geqslant \varepsilon_0\,.
\end{equation}
Since $f_k$ has a continuous extension to $\partial D,$ we may
consider that $x_k\in D.$ In addition, it follows from~(\ref{eq12D})
that we may find a sequence $x^{\,\prime}_k\in D,$ $k=1,2,\ldots ,$
such that $x^{\,\prime}_k\rightarrow x_0$ and
\begin{equation}\label{eq13E}
h(f_k(x_k), f_k(x^{\,\prime}_k))\geqslant \varepsilon_0/2\,.
\end{equation}
The relation~(\ref{eq13E}) contradicts with~(\ref{eq1B}). The
contradiction obtained above completes the proof for $x_0\ne\infty.$

\medskip
Finally, if $x_0=\infty,$ we set $\widetilde{f}_k:=\psi\circ f_k,$
$k=1,2,\ldots ,$ where $\psi(y)=\frac{y}{|y|^2}.$ By the assumption
of Theorem~\ref{th3}, the mappings $\widetilde{f}_k$ satisfy the
relations~(\ref{eq2*!A})--(\ref{eqA2}) at the origin with a new
function $\widetilde{Q}(x):=Q_{f_k}\left(\frac{x}{|x|}\right).$
The conformal change of the variables $x=\frac{y}{|y|^2}$
corresponds to the jacobian $J(x, y)=\frac{1}{|y|^{2n}}.$ So,
observe that, the relation~(\ref{eq3A_1}) holds. So, we may apply
above technique to $Q_{f_k}\left(\frac{x}{|x|^2}\right)$ instead of
$Q_{f_k}(x).$ Repeating further all the arguments given from
relation~(\ref{eq1B}) and further for the family of mappings
$\widetilde{f}_k$ instead of $f_k$, we arrive at the contradiction
with~(\ref{eq1B}), which completes the proof of Theorem~\ref{th3}.
\end{proof}
~$\Box$

\medskip\medskip\medskip
{\bf Funding.} The work was supported by the National Research
Foundation of Ukraine (Project ``Analogues of Carath\'{e}odory and
Koebe-Bloch theorems for Orlycz-Sobolev classes'', Project number
2025.02/0010).

\medskip
{\bf \noindent Victoria Desyatka} \\
Zhytomyr Ivan Franko State University,  \\
40 Velyka Berdychivs'ka Str., 10 008  Zhytomyr, UKRAINE \\
victoriazehrer@gmail.com

\medskip
{\bf \noindent Oleksandr Dovhopiatyi} \\
Zhytomyr Ivan Franko State University,  \\
40 Velyka Berdychivs'ka Str., 10 008  Zhytomyr, UKRAINE \\
Alexdov1111111@gmail.com

\medskip
{\bf \noindent Evgeny Sevost'yanov} \\
{\bf 1.} Zhytomyr Ivan Franko State University,  \\
40 Velyka Berdychivs'ka Str., 10 008  Zhytomyr, UKRAINE \\
{\bf 2.} Institute of Applied Mathematics and Mechanics\\
of NAS of Ukraine, \\
19 Henerala Batyuka Str., 84 116 Slov'yans'k,  UKRAINE\\
esevostyanov2009@gmail.com

\end{document}